\newtheorem{definition}{Definition}
\newtheorem{theorem}{Theorem}
\newtheorem{lemma}{Lemma}
\newtheorem{corollary}{Corollary}
\newtheorem{conjecture}{Conjecture}
\theoremstyle{remark}
\newcommand {\bb} [1] {\mathbb{#1}}
\newcounter{example}[section]
\newenvironment{example}[1][]{\refstepcounter{example}\par\medskip
\noindent \textit{Example~\theexample. #1} \rmfamily}{\medskip}
\begin{document}

\begin{center}\textsc{\Large Monotone Catenary Degree in Numerical Monoids\footnote{Written in 2015 during the PURE Math undergraduate summer research program and funded by NSF award \#1045147.}}\\
    D. Gonzalez Cedre, C. Wright, J. Zomback
\end{center}

\section{Abstract}

Recent investigations on the catenary degrees of numerical monoids have demonstrated that this invariant is a powerful tool in understanding the factorization theory of this class of monoids.  Although useful, the catenary degree is largely not sensitive to the lengths of factorizations of an element.  In this paper, we study the monotone catenary degree of numerical monoids, which is a variant of catenary degree that requires chains run through factorization lengths monotonically.  In general, the monotone catenary is greater than or equal to the catenary degree.  We begin by providing an important class of monoids (arithmetical numerical monoids) for which monotone catenary degree is equal to the catenary degree.  Conversely, we provide several classes of embedding dimension $3$ numerical monoids where monotone catenary degree is strictly greater.  We conclude by showing that this difference can grow arbitrarily large.

\section{Introduction}

A \textbf{numerical monoid} is a cofinite additive submonoid of $\mathbb{N}$. It is known that for every monoid of this type there exists a unique minimal generating set, so for a given numerical monoid $M$, we can write 
$$M=\langle n_1,\ldots,n_k\rangle=\{a_1n_1+\cdots+a_kn_k|a_1,\ldots,a_k\in\mathbb{N}\}.$$
In this case, we say that the monoid $M$ is of \textbf{embedding dimension} $k$. One interesting property of numerical monoids is that they do not in general have unique factorization. For example, the element $25$ in the monoid $\langle 5,7,9\rangle$ has precisely two factorizations, namely $5(5)$ and $7+2(9)$. We refer to each factorization in vector form, so these two factorizations correspond to the vectors $(5,0,0)$ and $(0,1,2)$. Since the study of numerical monoids has begun, several combinatorial invariants have been introduced to help  gain understanding of the structures. These invariants include the \textbf{catenary degree}, the \textbf{monotone catenary degree}, \textbf{adjacent catenary degree}, and \textbf{equivalent catenary degree}. Previously, there has been much research conducted on the standard catenary degree, but until now not much was known about the various types of catenary degrees in numerical monoids. It is clear from the definition of monotone catenary degree that it will always be greater than or equal to the regular catenary degree, but in general it is not well understood when this inequality is strict. The purpose of this paper is to explore the relationship between the monotone and regular catenary degrees, utilizing the equivalent and adjacent variations to aid our understanding. \\
\indent The results that follow describe the monotone catenary degree in various families of monoids, beginning with monoids generated by arithmetic sequences, proceeding to those generated by generalized arithmetic sequences, and ending with other notable families. We take this approach to illustrate how the monotone catenary degree is not as well-behaved as the standard catenary degree. In the case of monoids generated by arithmetic sequences the two values coincide, but many restrictions must be applied for this to hold in monoids generated by generalized arithmetic sequences. Further, we show that there are families of monoids for which the difference between their monotone and regular catenary degrees can be arbitrarily large.

\section{Definitions and Preliminaries}

Let $M=\langle n_1,\ldots,n_k\rangle$ be a numerical monoid and take some element $m\in M$. We will define the \textbf{set of factorizations} of the element $m$ to be
$$\mathcal{Z}(m)=\{(a_1,\ldots,a_k)\in\mathbb{N}^{k}|a_1n_1+\cdots+a_kn_k=m\}.$$
Now, given some $z\in\mathcal{Z}(m)$, we will say the \textbf{length} of $z=(z_1,\ldots,z_k)$, denoted $|z|$, is the sum of its components, $|z|=z_1+\cdots+z_k$. Additionally, we refer to the \textbf{length set} of $m$ as 
$$\mathcal{L}(m)=\{n\in\mathbb{N}|n=|z|\text{ for some }z\in\mathcal{Z}(m)\}.$$
Finally, we say that $L(m)=\max\{\mathcal{L}(m)\}$ and $l(m)=\min\{\mathcal{L}(m)\}$. The definition of the length set gives rise to another notable set, namely the \textbf{delta set} of an element, $\Delta(m)$, defined as follows:
$$\Delta(m)=\{b-a|a,b\in\mathcal{L}(m),\text{ }a<b,\text{ and }[a,b]\cap\mathcal{L}(m)=\{a,b\}\}.$$
This delta set is not particularly important for the bulk of our discussion, but it will prove useful in our treatment of monoids generated by arithmetic sequences. Together, all of the above concepts are of great importance to us in this paper, as the nature of the monotone catenary degree forces us to pay close attention to the lengths of specific factorizations. Furthermore, we can use our definition of length to construct a metric on the set $\mathcal{Z}(m)$. To do this, though, we must first understand the notion of a greatest common divisor of two factorizations. 
\begin{definition}
    Given $z,z'\in\mathcal{Z}(m)$, we will define the \textbf{greatest common divisor} of $z$ and $z'$ to be
    $$\gcd\{z,z'\}=(\min\{z_1,z'_1\},\min\{z_2,z'_2\},\ldots,\min\{z_k,z'_k\}).$$
\end{definition}
Intuitively, we can say that the greatest common divisor of $z$ and $z'$ is the $k$-tuple signifying all of the generators that the two factorizations have in common. The length of a greatest common divisor is defined similarly to that of factorizations; given $\gcd\{z,z'\}=(g_1,\ldots,g_k)$, we say $|\gcd\{z,z'\}|=g_1+\cdots+g_k$. This notion gives rise to our metric.
\begin{definition}
    For factorizations $z,z'\in\mathcal{Z}(m)$, we refer to the \textbf{distance} between $z$ and $z'$ as
    $$d(z,z')=\max\{|z|,|z'|\}-|\gcd\{z,z'\}|.$$
\end{definition}
\indent This distance allows us to define the notion of an $\mathbf{N}$\textbf{-chain} between two factorizations $z,z'\in\mathcal{Z}(m)$. 
\begin{definition} 
    An $N$-chain joining $z$ and $z'$ is a finite sequence of factorizations $z=z_0,z_1,\ldots z_n=z'$ such that $d(z_i,z_{i+1})\leq N$ for all $i\geq 0$.
\end{definition}
We use the concept of $N$-chains to define the catenary degree. 
\begin{definition}
    The \textbf{catenary degree} of an element $m\in M$, denoted $c(m)$, is the smallest natural number $N$ such that every two factorizations $z,z'\in\mathcal{Z}(m)$ are connected by an $N$-chain.
\end{definition}
This particular variation of the catenary degree will be referred to as the regular catenary degree. From here, we turn our attention to the various other types of catenary degree, which we will study at length throughout this paper. These variants are all similar to the regular catenary degree, but each either observes specific subsets of $\mathcal Z(m)$ or considers different types of $N$-chains. First, we discuss the equivalent and adjacent catenary degrees.
\begin{definition}
    The \textbf{equivalent catenary degree} of an element $m\in M$, denoted $c_{eq}(m)$, is the smallest natural number $N$ such that any two factorizations of the same length are connected by an $N$-chain with the restriction that $|z_i|=|z_{i+1}|$ for all $0\leq i\leq n-1$.
\end{definition}
To define the adjacent catenary degree, we need to define the notion of adjacency within the length set. \begin{definition}
    Take two lengths $a,b\in\mathcal{L}(m)$ and suppose without loss of generality that $a<b$. The lengths  $a$ and $b$ are deemed \textbf{adjacent} if $[a,b]\cap\mathcal{L}(m)=\{a,b\}$.
\end{definition}
Additionally, let
$$\mathcal{Z}_a(m)=\{z\in\mathcal{Z}(m)||z|=a\}.$$
\begin{definition}
    The \textbf{adjacent catenary degree} of an element $m\in M$, denoted $c_{adj}(m)$, is the smallest natural number $N$ such that $d(\mathcal{Z}_a,\mathcal{Z}_b)\leq N$ for all adjacent $a,b\in\mathcal{L}(m)$.
\end{definition}
Finally, we define the monotone catenary degree of an element.
\begin{definition}
    The \textbf{monotone catenary degree} of an element $m\in M$ is the minimum natural number $N$ such that any two factorizations $z,z'\in\mathcal{Z}(m)$ are connected by an $N$-chain subject to the constraint that $|z_i|\leq|z_{i+1}|$ or $|z_i|\geq|z_{i+1}|$ for all $0\leq i\leq n-1$.
\end{definition}
The above definitions all pertain to the various kinds of catenary degrees of elements, but our main goal in this paper is to discuss the set-wise catenary degrees. We define the catenary degree of the monoid $M$, denoted $c(M)$, as 
$$c(M)=\max\{c(m)|m\in M\}.$$
The set-wise monotone, equivalent, and adjacent catenary degrees are defined analogously. \\
\indent The primary purpose of this paper is to discuss the monotone catenary degree of numerical monoids, but we include analysis of the equivalent and adjacent catenary degrees in order to assist us in our understanding. The following theorem illustrates why these catenary degrees are useful to our discussion.

\begin{theorem}
    Let $M$ be a numerical monoid. Then for all $m\in M$, $c_{mon}(m) = \max\{c_{eq}(m), c_{adj}(m)\}$.
\end{theorem}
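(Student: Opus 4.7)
The plan is to prove both inequalities $c_{mon}(m) \geq \max\{c_{eq}(m), c_{adj}(m)\}$ and $c_{mon}(m) \leq \max\{c_{eq}(m), c_{adj}(m)\}$ separately. The first direction exploits the rigidity forced on a monotone chain by its endpoints, and the second builds a monotone chain by splicing adjacent transitions with equivalent chains inside single length classes.

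For the lower bound, write $N = c_{mon}(m)$. To see $c_{eq}(m) \leq N$, pick factorizations $z, z'$ with $|z| = |z'|$; any monotone $N$-chain between them has a monotone sequence of lengths with equal endpoints, so that sequence must be constant, and the chain is already an equivalent chain of the required form. To see $c_{adj}(m) \leq N$, fix adjacent lengths $a < b$ in $\mathcal{L}(m)$ together with any $z \in \mathcal{Z}_a$ and $z' \in \mathcal{Z}_b$. A monotone $N$-chain from $z$ to $z'$ is forced to be non-decreasing, and every intermediate factorization has length in $[a,b] \cap \mathcal{L}(m) = \{a,b\}$ by adjacency. Hence there is some index at which the chain jumps directly from a length-$a$ factorization to a length-$b$ factorization, and that pair witnesses $d(\mathcal{Z}_a,\mathcal{Z}_b) \leq N$.

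For the upper bound, set $N = \max\{c_{eq}(m), c_{adj}(m)\}$ and pick any $z, z' \in \mathcal{Z}(m)$, assuming without loss of generality that $|z| \leq |z'|$. Enumerate the lengths $|z| = \ell_0 < \ell_1 < \cdots < \ell_r = |z'|$ in $\mathcal{L}(m) \cap [|z|,|z'|]$ so that each consecutive pair $\ell_k,\ell_{k+1}$ is adjacent. For each $0 \leq k < r$, the bound $c_{adj}(m) \leq N$ supplies factorizations $a_k \in \mathcal{Z}_{\ell_k}$ and $b_k \in \mathcal{Z}_{\ell_{k+1}}$ with $d(a_k,b_k) \leq N$. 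Setting $b_{-1} = z$ and $a_r = z'$ for notational convenience, I would concatenate, for each $k$ from $0$ up through $r$, an equivalent $N$-chain from $b_{k-1}$ to $a_k$ inside $\mathcal{Z}_{\ell_k}$ (furnished by $c_{eq}(m) \leq N$) with the single jump from $a_k$ to $b_k$. The composite chain has every step of distance at most $N$ and is non-decreasing in length, hence monotone.

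The main obstacle is conceptual rather than technical: recognizing that the monotone chain condition decouples exactly into an intra-length component (controlled by $c_{eq}$) and an inter-length component (controlled by $c_{adj}$), and that adjacency of lengths is the precise notion needed to make the inter-length step a single hop. Once this decoupling is identified, both inequalities follow by essentially automatic arguments, with the boundary case $|z| = |z'|$ (where $r = 0$ and the construction collapses to a single equivalent chain) handled uniformly by the above recipe.
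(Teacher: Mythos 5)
Your proof is correct and takes essentially the same approach as the paper's: the lower bound comes from noting that a monotone chain whose endpoints have equal (resp.\ adjacent) lengths is forced to stay inside one length class (resp.\ to make a single hop between the two classes), and the upper bound comes from splicing equivalent $N$-chains within each length class with single adjacent jumps between consecutive classes. Your direct two-inequality organization is, if anything, cleaner and more explicit than the paper's three-case proof by contradiction, but the underlying ideas coincide.
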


\begin{proof}
    Let $M$ be a Numerical Monoid and $m \in M$. Assume for the sake of contradiction that
    $c_{mon}(m) \not = \max\{c_{eq}(m), c_{adj}(m)\}.$
    \begin{enumerate}
        \item[Case 1:] $c_{mon}(m) < c_{eq}(m)$\\
            \indent We know that there exists a monotone $c_{mon}(m)$-chain connecting any two factorizations of equal length
            because chains formed by factorizations of the same length are monotone. But, by definition, $c_{eq}(m)$ is the least
            $n \in \mathbb N$ such that there exists an $N$-chain between any two factorizations of equal length, contradiction our assumption that $c_{mon}(m) < c_{eq}(m)$.

        \item[Case 2:] $c_{mon}(m) < c_{adj}(m)$\\
            \indent Recall that $c_{mon}(m)$ is the least $n \in \mathbb N$ such that there exists a monotone $n$-chain between any two
            factorizations. By definition, $c_{adj}(m)$ is the least $n \in \mathbb N$ such that, given $l_1$ and $l_2$ adjacent lengths in
            $\mathcal L(m)$, the minimum distance between factorizations of length $l_1$ and factorizations of length $l_2$ is $\leq n$.
            From this definition it follows that there exist adjacent $l_1, l_2 \in \mathcal L(m)$ such that, for all
            $z, z' \in \mathcal Z(m)$, if $|z| = l_1$ and $|z'| = l_2$, then
            $$d(z, z') \geq c_{adj}(m).$$
            So, the minimum distance between any two factorizations of lengths $l_1$ and $l_2$ is $c_{adj}(m)$. Let $z$ and $z'$ be two
            factorizations of lengths $l_1$ and $l_2$ respectively. By definition, we have a monotone $c_{mon}(m)$-chain connecting
            $z$ to $z'$. At some point, this monotone chain must connect a factorization of length $l_1$ with a factorization of length
            $l_2$ through a distance $\geq c_{adj}(m)$. However, every distance in the $c_{mon}(m)$-chain must be
            $\leq c_{mon}(m)$, contradicting the assumption that $c_{mon}(m) < c_{adj}(m)$.

        \item[Case 3:]$c_{mon}(m) > c_{eq}(m)$ and $c_{mon} > c_{adj}(m)$\\
            \indent Let $z, z' \in \mathcal Z(m)$. Let $f = (f_1,\ldots, f_k) \in \mathcal Z(m)^k$ be a
            monotone chain such that $z = f_1$ and $f_k = z'$ and, for all $i \in \{1,\ldots, k - 1\}$, either
            $|f_i| = |f_{i + 1}|$ or $|f_i|$ and $|f_{i + 1}|$ are adjacent in $\mathcal L(m)$. Let $i \in \{1,\ldots,k - 1\}$.
            If $|f_i| = |f_{i + 1}|$, there is a $c_{eq}(m)$-chain between $f_i$ and $f_{i + 1}$, so we can choose $f_i$ and
            $f_{i + 1}$ such that $d(f_i, f_{i + 1}) \leq c_{eq}(m)$. If $|f_i|$ and $|f_{i + 1}|$ are adjacent in
            $\mathcal L(m)$ then $f_i$ and $f_{i + 1}$ can be chosen so that $d(f_i, f_{i + 1}) \leq c_{adj}(m)$. This makes
            $f$ a $\max\{c_{eq}(m), c_{mon}(m)\}$-chain. This process can be applied to any two $z$ and $z'$ in $\mathcal Z(m)$,
            producing a $\max\{c_{eq}(m), c_{mon}(m)\}$-chain between them. However, $c_{mon}(m)$ is the least $n \in \mathbb N$
            such that there is a monotone $n$-chain between any two factorizations in $\mathcal Z(m)$.
    \end{enumerate}

    Therefore, $c_{mon}(m) = \max\{c_{eq}(m), c_{adj}(m)\}.$
\end{proof}

Note that as a result of this, we see that $c_{mon}(M)=\max\{c_{eq}(M),c_{adj}(M)\}$. Both of these facts prove vital to us for the remainder of this paper.

\section{Arithmetic Monoids}

We consider first the class of monoids generated by arithmetic sequences, or rather monoids whose generators are of the form $a,a+d,\ldots,a+kd$ for positive integers $a,k$, and $d$. Henceforth, we will refer to these monoids as \textbf{arithmetic monoids}. The primary reason for working with arithmetic monoids is that they behave in a very well-understood fashion. Much research has been done in the area of arithmetic numerical monoids and, as such, we have many tools to work with when approaching the monotone catenary degree. For instance, it is known from \cite{bowles} that in an arithmetic monoid $M$, $\Delta(m)=d$ for all $m\in M$. Additionally, Theorem 3.8 in \cite{omidali} explicitly tells us the catenary degree of an arithmetic monoid.

\begin{theorem}
    In an arithmetic monoid $M$ such that $M=\langle a,a+d,\ldots,a+kd\rangle$, $c(M)=\left\lceil\frac{a}{k}\right\rceil+d$.
\end{theorem}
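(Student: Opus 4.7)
I would establish the two bounds $c(M) \geq \lceil a/k \rceil + d$ and $c(M) \leq \lceil a/k \rceil + d$ separately. Set $q = \lceil a/k \rceil$ and $r = qk - a \in \{0, 1, \ldots, k-1\}$. For the lower bound, I would consider the element $m = q(a + kd)$ and the factorization $z = (0, \ldots, 0, q)$ using only the largest generator. This $z$ has length $q$, and is the unique length-$q$ factorization of $m$: any $(b_0, \ldots, b_k) \in \mathcal{Z}_q(m)$ satisfies $\sum b_i = q$ and $\sum i b_i = qk$, which forces $b_k = q$. From the identity $q(a + kd) = (q + d - 1)a + (a + rd)$, obtained from $qkd - rd = (qk - r)d = ad$, the vector $z'$ consisting of $q + d - 1$ copies of $a$ and one copy of $a + rd$ is another factorization of $m$, of length $q + d$. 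Since $r < k$, the supports of $z$ and $z'$ are disjoint, so $d(z, z') = q + d$. More generally, any factorization of $m$ of length $q + dt$ for $t \geq 1$ satisfies $b_k \leq (qk - ta)/k < 1$, hence $b_k = 0$, so it lies at distance $q + dt \geq q + d$ from $z$. The first step of any chain in $\mathcal{Z}(m)$ originating at $z$ must therefore have length at least $q + d$, giving $c(m) \geq q + d$.

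For the upper bound, I would use the tautology $c(m) \leq c_{mon}(m)$ together with Theorem 1 to reduce to bounding each of $c_{eq}(m)$ and $c_{adj}(m)$ by $q + d$. Every length-preserving trade in an arithmetic monoid is generated by the elementary swap $(a + id) + (a + jd) = (a + (i-1)d) + (a + (j+1)d)$, a move of distance $2$, so $c_{eq}(m) \leq 2 \leq q + d$. For $c_{adj}(m)$, since $\Delta(m) = \{d\}$ by \cite{bowles}, adjacent lengths in $\mathcal{L}(m)$ differ by exactly $d$, and the task reduces to producing, for each adjacent pair $\ell < \ell + d$, a pair of factorizations at distance at most $q + d$. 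The plan is to locate within $\mathcal{Z}_{\ell + d}(m)$---possibly after a sequence of equal-length swaps---a factorization containing the block $(q + d - 1)a + (a + rd)$, and to replace that block with $q$ copies of $a + kd$ using the same trade as in the lower bound, landing in $\mathcal{Z}_\ell(m)$ at distance exactly $q + d$.

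The main obstacle is justifying the preparatory step in the adjacent-length argument: one must show that within every length class $\mathcal{Z}_{\ell + d}(m)$ with $\ell \in \mathcal{L}(m)$, equal-length swaps can always be used to reach a factorization exhibiting the block required for the descent trade of distance $q + d$. This is a structural statement about the minimal presentation and Betti elements of $M$, and forms the heart of the case analysis carried out in \cite{omidali}.
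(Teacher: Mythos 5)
The paper offers no proof of this statement at all---it is quoted as Theorem 3.8 of \cite{omidali}---so your attempt can only be measured against the literature and against the machinery the paper develops afterward. Your lower bound is complete and correct: with $q=\lceil a/k\rceil$ and $r=qk-a$, the factorization $(0,\ldots,0,q)$ of $m=q(a+kd)$ is the unique one of minimal length $q$ (the constraint $\sum i b_i = qk$ with $\sum b_i = q$ forces all weight onto the last generator), the identity $q(a+kd)=(q+d-1)a+(a+rd)$ shows $|\mathcal{Z}(m)|\geq 2$, and every other factorization has last coordinate $0$ and length at least $q+d$, so every edge out of $(0,\ldots,0,q)$ in any chain has distance at least $q+d$. (You write ``length at least $q+d$'' for the first step where you mean ``distance,'' but the argument is sound.) The reduction of the upper bound to $c_{eq}(m)\leq 2$ and $c_{adj}(m)\leq q+d$ via the paper's Theorem 1 is also legitimate.

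The gap you flag in the adjacent-length step is, however, genuine and is the heart of the matter: you must show that every non-minimal length class of every element contains a factorization carrying the block of roughly $q+d$ copies of $a$ (or your block $(q+d-1)a+(a+rd)$) needed to perform the descent trade, and this does not follow from anything you have written. It is precisely what the paper's Section 4 supplies: Lemma 1 (every length class contains an ``optimized'' factorization $(\alpha,0,\ldots,0,1,0,\ldots,0,\beta)$ or $(\alpha,0,\ldots,0,\beta)$), Lemma 3 (non-minimality forces $\alpha\geq\lceil a/k\rceil+d-1$), Lemma 4 (the move $B$ trading $q+d$ copies of $a$ for a length-$q$ factorization of $a(q+d)$ in $\langle a+d,\ldots,a+kd\rangle$), and the separate argument in Theorem 3 for the boundary case $\alpha=\lceil a/k\rceil+d-1$, where neither your block nor the move $B$ is directly available and a different comparison factorization must be built. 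Those lemmas rest on the membership criterion $kc_1\geq c_2$ from \cite{chapman} rather than on the value of $c(M)$, so importing them would close your gap without circularity; but as it stands your proposal establishes only the lower bound and the equal-length half of the upper bound.
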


Since the monotone catenary degree can be effectively studied using the equivalent and adjacent catenary degrees, the following result about arithmetic monoids informs us that our main concern in this section is the adjacent catenary degree. 

\begin{theorem}
    For all elements $m$ of an arithmetic monoid $M$ in arbitrary embedding dimension, $c_{eq}(m)=2$.
\end{theorem}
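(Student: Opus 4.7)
The plan is to prove the two inequalities $c_{eq}(m) \leq 2$ and $c_{eq}(m) \geq 2$ separately, with the content of the theorem concentrated in the upper bound. For the lower bound (valid whenever $m$ admits two distinct factorizations of a common length), observe that for any distinct $z, z' \in \mathcal{Z}_n(m)$ one has $d(z, z') \geq 2$: indeed $d(z, z') \geq 1$ always holds, and equality would force $z$ and $z'$ to differ by exchanging a single generator $a + id$ for another $a + i'd$ of equal numerical value, which is impossible since the generators of an arithmetic sequence are pairwise distinct. Thus any equivalent chain linking two distinct factorizations of length $n$ must have mesh at least $2$.

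For the upper bound, I would systematically use the arithmetic trade relations
\[
(a + i_1 d) + (a + i_2 d) = (a + i_3 d) + (a + i_4 d) \quad \text{whenever} \quad i_1 + i_2 = i_3 + i_4,
\]
each of which moves one factorization to another of the same length at distance at most $2$. For each $n \in \mathcal{L}(m)$, set $\bar{\imath} = (m - na)/(nd)$ and $j = \lfloor \bar{\imath} \rfloor$, and define a \emph{canonical} factorization $z^* \in \mathcal{Z}_n(m)$ with support contained in $\{j, j+1\}$ by $z^*_{j+1} = n(\bar{\imath} - j)$, $z^*_j = n - z^*_{j+1}$, and all other coordinates zero. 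The length and value equations uniquely determine this nonnegative integer solution (using $m \in M$ to see $d \mid (m - na)$), so $z^*$ is well-defined. It then suffices to show every $z \in \mathcal{Z}_n(m)$ can be driven to $z^*$ by distance-$2$ trades, since any two same-length factorizations can then be concatenated through $z^*$ into a $2$-chain.

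To execute the reduction, I would introduce the monovariant
\[
V(z) = \sum_{i=0}^{k} z_i (i - \bar{\imath})^2.
\]
If $z \neq z^*$, some nonzero coordinate of $z$ lies outside $\{j, j+1\}$, and since $z$'s weighted mean index is $\bar{\imath} \in [j, j+1]$, its support must contain indices $i_1 \leq j$ and $i_2 \geq j+1$ with $i_2 - i_1 \geq 2$. Performing the trade $(a + i_1 d) + (a + i_2 d) \mapsto (a + (i_1+1) d) + (a + (i_2-1) d)$ keeps the factorization in $\mathcal{Z}_n(m)$ and changes $V$ by exactly $-2(i_2 - i_1 - 1) \leq -2$, so the process terminates at $z^*$ after finitely many steps. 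The main obstacle is verifying the canonical form is a legitimate factorization for every $n \in \mathcal{L}(m)$, in particular handling the degenerate boundary cases $\bar{\imath} \in \{0, k\}$ (where $z^*$ collapses to a single nonzero entry) and the collision $i_1 + 1 = i_2 - 1$ during a reduction step (where the increment is concentrated at one coordinate rather than split across two); both are straightforward verifications once the setup is in place.
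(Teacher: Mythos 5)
Your proof is correct, and it is genuinely different in character from the paper's: the paper disposes of this theorem in one line by citing Lemma 4.5 of the Chapman--Corrales--Miller--Miller--Patel paper, which already supplies a $2$-chain of equal-length factorizations, whereas you give a self-contained argument. Your reduction is essentially a from-scratch proof of that cited lemma: the trades $(a+i_1d)+(a+i_2d)=(a+(i_1+1)d)+(a+(i_2-1)d)$ are the same length-preserving moves the paper later imports (as ``moves centered at $i$ and $j$'') in its Lemma 1, and your monovariant $V(z)=\sum z_i(i-\bar\imath)^2$ cleanly certifies termination at the canonical two-support factorization; I checked that $\Delta V=-2(i_2-i_1-1)$, that $z\neq z^*$ really does force a support pair with $i_2-i_1\geq 2$ straddling $\{j,j+1\}$, and that each trade has distance exactly $2$ (including the collision $i_1+1=i_2-1$). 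Your lower-bound observation, that distinct equal-length factorizations are at distance at least $2$ because distance $1$ would require two equal generators, is also correct and is something the paper never addresses. What your approach buys is independence from the external reference and an explicit normal form $z^*$ for each length class; what it costs is length, plus it surfaces the caveat you rightly flag: for elements $m$ whose lengths are all achieved by a unique factorization, $c_{eq}(m)=0$ rather than $2$ under the paper's definition, so the theorem as literally stated needs that hypothesis (this is a defect of the paper's statement, not of your argument).
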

\begin{proof}
    Let $m\in M$ and take any two factorizations $z$ and $z'$ of $m$ such that $|z|=|z'|$. Lemma 4.5 from \cite{corrales} tells us that we can construct a 2-chain of factorizations $z=z_0,z_1,\ldots,z_n=z'$ such that $|z_i|=|z|=|z'|$ for $0\leq i\leq n$. Thus our theorem is proven. 
\end{proof}
From here, we proceed to view individual factorizations, with the goal of finding specific factorizations from which we can alway step to an adjacent factorization length, producing a distance less than or equal to the catenary degree of the monoid. This will allow us to conclude that the adjacent catenary degree of the element is equal to the regular catenary degree and subsequently that the monotone catenary degree of an element will be equal to the regular catenary degree of the same element.\\ 
\indent For the following result, it is important to construct the notion of a length-preserving move between factorizations in an arithmetic monoid. In \cite{corrales}, Lemma 4.2 tells us that in any such monoid $M$, if some factorization $z=(z_0,z_1,\ldots,z_k)$ of an element $m\in M$ is of length $|z|$ and there exist $z_i$ and $z_j$ such that $z_i\geq 1$ and $z_j \geq 1$ and $1\leq i<j\leq k-1$, then there exists another factorization $z'$, also of length $|z|$, such that  
$$z'=(\ldots,z_{i-1}+1,z_i-1,\ldots,z_j-1,z_{j+1}+1,\ldots),$$
where the ellipses represent entries unchanged from those of $z$. Furthermore, if we can apply the same move in the scenario when $i=j$, so long as we now introduce the constraint $i=j\geq 2$. For these moves, we will say that a move is centered at $i$ and $j$, or centered at $i$ in the case where $i=j$. We will refer to these moves in the proof of the following lemma.

\begin{lemma}
    Let $M$ an arithmetic monoid and $m\in M$. Now, let $l\in \mathcal{L}(m)$. Then there exists a factorization $z$ such that $|z|=l$ and $z$ is of the form
    $$z=(\alpha,0,\ldots,0,1,0,\ldots,0,\beta)\text{ or }z=(\alpha,0,\ldots,0,\beta).$$
    For some natural numbers $\alpha$ and $\beta$.
\end{lemma}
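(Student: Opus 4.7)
The plan is to exhibit the desired factorization as a minimizer of a suitable potential function over the set $\mathcal{Z}_l(m)$ of factorizations of $m$ having length $l$. I would define
$$W(z) \;=\; \sum_{i=0}^{k} z_i \cdot i(k-i),$$
a nonnegative integer that vanishes exactly when every nonzero entry of $z$ is supported at a boundary coordinate (position $0$ or position $k$). The first step is to verify that each length-preserving move from Lemma~4.2 of \cite{corrales} strictly decreases $W$: writing $w_i = i(k-i)$, the two-index move centered at $i < j$ changes $W$ by $(w_{i-1} - w_i) + (w_{j+1} - w_j) = 2(i - j - 1) \le -4$, while the one-index move centered at $i = j$ changes $W$ by the second difference $w_{i-1} - 2w_i + w_{i+1} = -2$.

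Having established these monotonicity facts, I would choose $z^* \in \mathcal{Z}_l(m)$ minimizing $W$; such a minimizer exists because $W$ takes nonnegative integer values on a nonempty finite set. Since every move strictly lowers $W$, none of them is applicable to $z^*$. The failure of the two-index move means no pair $1 \le i < j \le k-1$ satisfies $z^*_i \ge 1$ and $z^*_j \ge 1$, so at most one interior coordinate of $z^*$ is positive. The failure of the one-index move means no interior coordinate $z^*_i$ with $2 \le i \le k-1$ satisfies $z^*_i \ge 2$. Together, these constraints force $z^*$ to take one of the claimed forms: either all interior entries vanish and $z^* = (\alpha, 0, \ldots, 0, \beta)$, or exactly one interior entry equals $1$ and $z^* = (\alpha, 0, \ldots, 0, 1, 0, \ldots, 0, \beta)$.

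The main obstacle is the edge-case index $i = 1$, where Lemma~4.2 does not supply a one-index reducing move (its constraint $i = j \ge 2$ excludes it). To rule out a putative minimizer with $z^*_1 \ge 2$ and all other interior entries zero, I would invoke the elementary identity $2(a + d) = a + (a + 2d)$, which yields the length-preserving exchange $(z_0, z_1, z_2, \ldots) \mapsto (z_0 + 1, z_1 - 2, z_2 + 1, \ldots)$ and also lowers $W$ by $2$. Adding this exchange to the list of prohibited moves at $z^*$ closes the boundary gap, and the degenerate case $k \le 1$ is vacuous since there are no interior entries to constrain.
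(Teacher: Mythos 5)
Your proof is correct, and it takes a genuinely different route from the paper's. The paper argues by induction on the embedding dimension: it applies the inductive hypothesis to the first $k$ coordinates, pushes the resulting $\beta$ toward the last coordinate with moves centered at $k-1$, and iterates, needing a separate argument that the iteration terminates (the case $\alpha^{(x)}+\beta^{(x)}=\alpha^{(x+1)}+\beta^{(x+1)}$). You instead run a single extremal argument: the concave weight $w_i=i(k-i)$ vanishes exactly at the boundary coordinates, every length-preserving trade strictly decreases $W$, and a $W$-minimizer in the finite nonempty set $\mathcal{Z}_l(m)$ therefore admits no further trades, which pins down its shape. Your computations check out ($\Delta W = 2(i-j-1)\le -4$ for the two-index trade and $\Delta W = -2$ for the second-difference trade), and you correctly identify and patch the one real boundary gap --- the one-index move at $i=1$, which the cited lemma excludes --- via the identity $2(a+d)=a+(a+2d)$. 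What your approach buys is that termination and the existence of the normal form come for free from an integer-valued monovariant, whereas the paper must track superscripted intermediate factorizations and handle a potential non-terminating case by hand; what the paper's approach buys is an explicit sequence of moves reaching the optimized factorization, which it leans on informally in the worked examples that follow.
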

\begin{proof}
    We prove by induction on $k$, which will here be taken as the embedding dimension. As the base case, we observe when $k=3$. In this case, take some element $m$ of an arithmetic monoid  $M$ and a factorization $z\in \mathcal{Z}(m)$ of length $l$, say $z=(z_1,z_2,z_3)$. Then apply our length preserving move for embedding dimension three, namely the move centered at position 1, $\lfloor\frac{z_2}{2}\rfloor$ times, yielding the factorization $z'$ where
    $$z'=\left(z_1+\left\lfloor\frac{z_2}{2}\right\rfloor,z_2-2\left\lfloor\frac{z_2}{2}\right\rfloor,z_3+\left\lfloor\frac{z_2}{2}\right\rfloor\right).$$
    It is clear that the second coordinate of $z_2$ will be either 1 or 0, and thus we have a factorization of length $l$ of the desired form.\\
    \indent Now, we assume that our hypothesis holds for embedding dimension $n$. To be precise, we suppose that for any monoid $M=\langle a,a+d,\ldots,a+(k-1)d\rangle$, given an element $m\in M$ and a factorization $z$ of $m$ of length $l$, we can generate a factorization $z'$ of $m$, also of length $l$, such that $z'$ is of the form $z'=(\alpha,0,\ldots,0,1,0,\ldots,0,\beta)$ or $z'=(\alpha,0,\ldots,0,\beta)$. Take some arithmetic monoid $M_0$ of embedding dimension $k+1$, say $M_0=\langle a,a+d,\ldots,a+kd\rangle$. Then it must be that there exists a monoid $M$ of embedding dimension $k$ such that $M=\langle a,a+d,\ldots,a+(k-1)d\rangle$. Now, take $m_0\in M_0$ and some factorization $y$ of $m_0$ such that $y=(y_0,\ldots,y_k)$. Note that $n=m_0-y_k(a+kd)\in M$, so $z=(y_0,\ldots,y_{k-1})\in\mathcal{Z}(n)$ and by our inductive hypothesis we can construct $z'\in\mathcal{Z}(n)$ such that $|z'|=|z|$ and either $z'=(\alpha^{(1)},0,\ldots,0,1,0,\ldots,0,\beta^{(1)})$ or $z'=(\alpha^{(1)},0,\ldots,0\beta^{(1)})$. Thus we can construct a factorization of $m_0$, $y^{(1)}$, where
    $$y^{(1)}=(\alpha^{(1)},0,\ldots,0,1,0,\ldots,0,\beta^{(1)},y_k)\text{ or }y^{(1)}=(\alpha^{(1)},0,\ldots,0,\beta^{(1)},y_k).$$
    Now, we apply our length-preserving moves centered at $k-1$ until we cannot any more, yielding one of the four following factorizations
    $$\left(\alpha^{(1)},\ldots,1,\ldots,\left\lfloor\frac{\beta^{(1)}}{2}\right\rfloor,0,y_k+\left\lfloor\frac{\beta^{(1)}}{2}\right\rfloor\right)$$
    $$\left(\alpha^{(1)},\ldots,1,\ldots,\left\lfloor\frac{\beta^{(1)}}{2}\right\rfloor,1,y_k+\left\lfloor\frac{\beta^{(1)}}{2}\right\rfloor\right)$$
    $$\left(\alpha^{(1)},\ldots,\left\lfloor\frac{\beta^{(1)}}{2}\right\rfloor,0,y_k+\left\lfloor\frac{\beta^{(1)}}{2}\right\rfloor\right)$$
    $$\left(\alpha^{(1)},\ldots,\left\lfloor\frac{\beta^{(1)}}{2}\right\rfloor,1,y_k+\left\lfloor\frac{\beta^{(1)}}{2}\right\rfloor\right).$$
    At this point, we can again consider the first $k$ entries of this factorization as a factorization of the element $m_0-(y_k+\lfloor\frac{\beta}{2}\rfloor)(a+kd)$ and put it into one of the two desired forms, yielding a new factorization of $m_0$ of the form
    $$y^{(2)}=\left(\alpha^{(2)},0,\ldots,0,1,0,\ldots,0,\beta^{(2)},y_k+\left\lfloor\frac{\beta^{(1)}}{2}\right\rfloor\right)\text{ or }y''=\left(\alpha^{(2)},0,\ldots,0,\beta^{(2)},y_k+\left\lfloor\frac{\beta^{(1)}}{2}\right\rfloor\right).$$
    Since $|y^{(2)}|=|y^{(1)}|=|y|$, it must be that $\alpha^{(1)}+\beta^{(1)}\geq \alpha^{(2)}+\beta^{(2)}$. Hence in all cases when the inequality is strict, we can repeat this process until we achieve a factorization of one of the following forms
    $$y^{(d)}=\left(\alpha^{(d)},0,\ldots,0,1,0,\ldots,0,1,y_k+\left\lfloor\frac{\beta^{(1)}}{2}\right\rfloor+\cdots+\left\lfloor\frac{\beta^{(d-1)}}{2}\right\rfloor\right)$$
    $$y^{(d)}=\left(\alpha^{(d)},0,\ldots,0,1,0,\ldots,0,y_k+\left\lfloor\frac{\beta^{(1)}}{2}\right\rfloor+\cdots+\left\lfloor\frac{\beta^{(d-1)}}{2}\right\rfloor\right)$$
    $$y^{(d)}=\left(\alpha^{(d)},0,\ldots,0,1,0,y_k+\left\lfloor\frac{\beta^{(1)}}{2}\right\rfloor+\cdots+\left\lfloor\frac{\beta^{(d-1)}}{2}\right\rfloor\right)$$
    $$y^{(d)}=\left(\alpha^{(d)},0,\ldots,0,y_k+\left\lfloor\frac{\beta^{(1)}}{2}\right\rfloor+\cdots+\left\lfloor\frac{\beta^{(d-1)}}{2}\right\rfloor\right)$$
    Note that all but the first of these options is already of the form described in the lemma. In the first option, assuming that the center 1 is in the $i$th position, we can simply apply a length-preserving move centered at $i$ and $k-1$ to put this factorization into the appropriate form.\\
    \indent We must additionally address the possibility that $\alpha^{(x)}+\beta^{(x)}=\alpha^{(x+1)}+\beta^{(x+1)}$ for some $x$, since in this case it is not clear that our process will terminate. In this scenario, it must be that $\beta^{(x)}=1\text{ or }0$ since $\alpha^{(x+1)}+\beta^{(x+1)}=\alpha^{(x)}+\beta^{(x)}+\lfloor\frac{\beta^{(x)}}{2}\rfloor$. If $\beta^{(x)}=0$, then we are done, but if $\beta^{(x)}=1$ then we must have passed over the factorization
    $$\left(\alpha^{(x)},0,\ldots,0,1,0,\ldots,0,1,y_k+\left\lfloor\frac{\beta^{(1)}}{2}\right\rfloor+\cdots+\left\lfloor\frac{\beta^{(x-1)}}{2}\right\rfloor\right),$$
    in which case we can simply apply a length-preserving move centered at $i$ and $k-1$ in order to yield a factorization of the desired form.

\end{proof}

We will call a factorization of this form \textit{optimized}. To see a concrete instance of the process described in the proof above, we consider the following example:

\noindent \textit{Example:} Observe the arithmetic monoid $M=\langle 11,15,19,23,27\rangle$. We have one factorization $(0,1,0,3,1)$ of $111$ in $M$ and wish to generate an optimized factorization of the same length. To do this, we apply a move centered at 3 to produce the factorization $(0,1,1,1,2)$ and subsequently apply a move centered at 1 and 3, yielding $(1,0,1,0,3)$. This is an optimized factorization with $\alpha=1$, $\beta=1$, and $i=2$. 

\begin{lemma}
    Let $M$ be an arithmetic monoid $M=\langle a,a+d,\ldots,a+kd\rangle$. Then for some natural number $m\in\mathbb{N}$, $m\in M$ if and only if there exist natural numbers $c_1,c_2$ with the constraint $kc_1\geq c_2$ such that $m=c_1n+c_2s$.
\end{lemma}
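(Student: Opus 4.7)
The statement (reading $n=a$ and $s=d$, which must be a typo) is a structure lemma for arithmetic monoids: membership in $M$ is equivalent to a single linear expression $c_1 a + c_2 d$ satisfying one inequality. My plan is to prove the two directions separately, both by direct computation.

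For the forward direction, I would take $m \in M$ and write it, by definition of $M$, as $m = \sum_{i=0}^{k} a_i (a + id)$ for some $a_0,\ldots,a_k \in \mathbb{N}$. Rearranging yields
$$m = \Bigl(\sum_{i=0}^{k} a_i\Bigr) a + \Bigl(\sum_{i=0}^{k} i\, a_i\Bigr) d,$$
so setting $c_1 = \sum a_i$ and $c_2 = \sum i a_i$ gives the claimed expression. The inequality $kc_1 \geq c_2$ is immediate from $i \leq k$ for every $i$ appearing in the second sum.

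For the reverse direction, given $c_1, c_2 \in \mathbb{N}$ with $kc_1 \geq c_2$, the goal is to exhibit coefficients $a_0,\ldots,a_k \in \mathbb{N}$ with $\sum a_i = c_1$ and $\sum i a_i = c_2$; once these are produced, $m = \sum a_i(a+id) \in M$. The idea is division with remainder: write $c_2 = qk + r$ with $0 \leq r < k$. If $r = 0$, I take $a_k = q$, $a_0 = c_1 - q$, and all other $a_i = 0$. If $r > 0$, I take $a_k = q$, $a_r = 1$, $a_0 = c_1 - q - 1$, and all other $a_i = 0$. Both assignments clearly produce the correct weighted sums.

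The one delicate point is showing that $a_0 \geq 0$ in each case, and this is precisely where the hypothesis $kc_1 \geq c_2$ is used. When $r = 0$, the hypothesis gives $kc_1 \geq qk$, hence $c_1 \geq q$. When $r > 0$, the hypothesis gives $kc_1 \geq qk + r > qk$, so $c_1 > q$, i.e.\ $c_1 \geq q + 1$. This case split is the only real obstacle, and it is resolved by noting that the inequality is strict exactly when a positive remainder forces us to spend one extra generator on $a_r$. No other technicalities should arise, so the proof will be short.
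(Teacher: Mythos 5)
Your proof is correct, but it takes a different route from the paper: the paper does not actually prove this lemma at all, it simply cites Lemmas 7 and 10 of the Chapman--Garc\'ia-S\'anchez--Llena reference, whereas you give a self-contained two-line argument in each direction. Your forward direction (collect the coefficients of $a$ and of $d$ from any factorization, and bound the $d$-coefficient by $k$ times the $a$-coefficient since each index is at most $k$) and your reverse direction (division with remainder $c_2 = qk + r$, spending $q$ copies of $a+kd$, possibly one copy of $a+rd$, and padding with copies of $a$) are both sound, and you correctly isolate the only delicate point, namely that $a_0 \geq 0$, which is exactly where the hypothesis $kc_1 \geq c_2$ enters --- with the strict inequality $c_1 \geq q+1$ needed precisely when $r > 0$. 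What your approach buys is transparency and independence from the literature: the reader sees exactly why the inequality $kc_1 \geq c_2$ is the right membership criterion, which matters because the paper immediately uses this inequality quantitatively (to characterize when an optimized factorization has non-minimal length). Two small points worth making explicit if you polish this: the symbols $n$ and $s$ in the statement are indeed $a$ and $d$ (inherited from the cited source's notation), and your division step tacitly assumes $k \geq 1$, which holds for any arithmetic monoid of embedding dimension at least two.
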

\begin{proof}
    This result follows directly from the statements of Lemma 7 and Lemma 10 from \cite{chapman}.
\end{proof}

Observe that, in an optimized factorization $z$ where $z=(\alpha,0,\ldots,0,1,0,\ldots,0,\beta)$ or $z=(\alpha,0,\ldots,0,\beta)$ and, in the first case, the 1 is understood to be in the $i$th coordinate, the constraint in the previous lemma, $kc_1\geq c_2$, is equivalent to the constraints $k(\alpha+\beta+1)\geq i+\beta k$ and $k(\alpha+\beta)\geq \beta k$, respectively. Additionally, from this lemma we can conclude that such a factorization is of minimum length in $\mathcal{L}(m)$ if reducing the length of the factorization would violate our lemma, or rather if
$$k(\alpha+\beta+1-d)<i+\beta k+a\text{ or }k(\alpha+\beta-d)<\beta k+a.$$
This leads us to the conclusion that an optimized factorization is \textit{not} of minimum length whenever 
\newline$k(\alpha+\beta+1-d)\geq i+\beta k+a$ or $k(\alpha+\beta-d)\geq i+\beta k+a$.

\noindent \textit{Example:} Again observe factorizations of 111 in the monoid $M=\langle 11,15,19,23,27\rangle$, where $a=11$, $d=4$, and $k=4$. We take an optimized factorization of length 9 and use the above relation to show that this factorization is not of minimum length. Note that $(8,0,0,1,0)\in\mathcal{Z}(111)$ is an optimized factorization with $\alpha=8$, $\beta=0$, and $i=3$. We see that $k(\alpha+\beta+1-d)=4(8+1-4)=20$ and $i+\beta k+a=3+0(4)+11=14$. Since $20>14$, it must be that $(8,0,0,1,0)$ is not of minimum length, hence we must have at least one factorization of length 5.

\begin{lemma}
    Let $M$ an arithmetic monoid and $m\in M$. If $z\in\mathcal{Z}(m)$ is an optimized factorization and $|z|\neq l(m)$, then $\alpha\geq c(M)-1=\left\lceil\frac{a}{k}\right\rceil+d-1$.
\end{lemma}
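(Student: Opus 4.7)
The plan is to unpack the explicit ``not of minimum length'' criterion that the paper has just established and then simply solve the resulting inequality for $\alpha$. Because $z$ is optimized and $|z|\neq l(m)$, the factorization $z$ fails to be of minimum length, so by the discussion following Lemma~5 exactly one of
\[
k(\alpha+\beta+1-d)\geq i+\beta k+a \qquad\text{or}\qquad k(\alpha+\beta-d)\geq \beta k+a
\]
holds, according to whether $z=(\alpha,0,\ldots,0,1,0,\ldots,0,\beta)$ with the $1$ in coordinate $i$, or $z=(\alpha,0,\ldots,0,\beta)$. All that remains is to isolate $\alpha$ in each case and compare with $c(M)=\lceil a/k\rceil+d$ from Theorem~2.

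For the first form I expand the left side and cancel $\beta k$ from both sides, obtaining $k\alpha\geq i+a+k(d-1)$ and hence $\alpha\geq(i+a)/k+d-1$. Because $\alpha\in\mathbb{N}$, I may replace the right-hand side by its ceiling, and since $i\geq 1$ I get $\alpha\geq\lceil(1+a)/k\rceil+d-1$. A short case split on whether $k\mid a$ shows that $\lceil(1+a)/k\rceil\geq\lceil a/k\rceil$ in either case, so
\[
\alpha\geq\lceil a/k\rceil+d-1=c(M)-1,
\]
which is the desired bound.

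For the second form the inequality $k(\alpha+\beta-d)\geq\beta k+a$ simplifies directly, after cancelling $\beta k$, to $k\alpha\geq a+kd$, whence $\alpha\geq a/k+d$ and thus $\alpha\geq\lceil a/k\rceil+d$, strictly stronger than needed. The only subtlety in the whole argument is the bookkeeping around the ceiling function in the first case: one must take advantage of the integrality of $\alpha$ and of $i\geq 1$ at the right moment, otherwise the bound obtained is a fraction shy of $\lceil a/k\rceil+d-1$. Everything else is immediate from the machinery already in place.
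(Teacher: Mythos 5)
Your proposal is correct and follows essentially the same route as the paper: in both cases you invoke the non-minimality inequalities, isolate $\alpha$, and use the integrality of $\alpha$ to pass to the ceiling, obtaining $\alpha\geq\left\lceil\frac{a}{k}\right\rceil+d-1$ in the first case and the strictly stronger $\alpha\geq\left\lceil\frac{a}{k}\right\rceil+d$ in the second. The extra care you take with $i\geq 1$ and the case split on $k\mid a$ is harmless but unnecessary, since $\left\lceil\frac{i+a}{k}\right\rceil\geq\left\lceil\frac{a}{k}\right\rceil$ already follows from monotonicity of the ceiling.
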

\begin{proof}
    We consider our proof in two cases, namely when the factorization $z$ takes each of the two forms possible for an optimized factorization.
    \begin{enumerate}

        \item[Case 1:] First, we let $z=(\alpha,0,\ldots,0,1,0,\ldots,0,\beta)$, where our 1 is understood to be in the $i$th coordinate, hence it is the coefficient for $a+id$ within the expansion of $m$. Suppose that $z$ is not of minimum length, or rather $|z|\neq l(m)$. Recall that in this scenario, a factorization of non-minimal length must satisfy the relation $k(\alpha+\beta+1-d)\geq i+\beta k+a$.
            In manipulating this inequality, we see
            \begin{align*}
                k(\alpha+\beta+1-d)&\geq i+\beta k+a\\
                \alpha+\beta+1&\geq \frac{1}{k}(i+a+\beta k)+d\\
                \alpha&\geq \frac{i+a}{k}+d-1
            \end{align*}
            Now, since $\alpha$ is a natural number, we can conclude that $\alpha\geq \left\lceil\frac{i+a}{k}\right\rceil+d-1\geq \left\lceil\frac{a}{k}\right\rceil +d-1$.

        \item[Case 2:] Second, we consider when $z=(\alpha,0,\ldots,0,\beta)$. Again, suppose that $|z|\neq l(m)$ and recall our relation from before, now $k(\alpha+\beta-d)\geq \beta k+a$.
            Manipulating this, we have
            \begin{align*}
                k(\alpha+\beta-d)&\geq \beta k+a\\
                \alpha+\beta-d&\geq\frac{1}{k}(\beta k+a)\\
                \alpha&\geq\frac{a}{k}+d
            \end{align*}
            Again invoking the fact that $\alpha$ is a natural number, we see that this is equivalent to \newline $\alpha\geq\left\lceil\frac{a}{k}\right\rceil+d>\left\lceil\frac{a}{k}\right\rceil+d-1$.
    \end{enumerate}
    Hence our lemma is proven.
\end{proof}

\begin{lemma}
    In an arithmetic monoid $M=\langle a,a+d,\ldots, a+kd \rangle$, we can use the element $b\in M$ of the form $b=a\left(\left\lceil\frac{a}{k}\right\rceil+d \right)$ to construct a move $B$ of factorizations such that, for an arbitrary factorization $z$ of an element $m\in M$, $|Bz|=|z|-d$ and $d(z,Bz)=c(M)=\left\lceil\frac{a}{k}\right\rceil+d$.
\end{lemma}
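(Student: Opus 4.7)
My plan is to exhibit two explicit factorizations $z_1,z_2\in\mathcal{Z}(b)$ whose lengths differ by exactly $d$ and whose supports are disjoint, and then define $B$ as the trade that substitutes the $z_1$ pattern for the $z_2$ pattern inside any factorization containing $z_1$ as a subfactorization.

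First, since $b=a\left(\left\lceil\frac{a}{k}\right\rceil+d\right)$ is a positive multiple of $a$, we have $b\in M$, and the all-$a$ factorization
$$z_1 = \left(\left\lceil\frac{a}{k}\right\rceil+d,\,0,\ldots,0\right)\in\mathcal{Z}(b), \qquad |z_1|=c(M).$$

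Next, I would construct a second factorization $z_2$ of $b$ of length $\left\lceil\frac{a}{k}\right\rceil = c(M)-d$. Applying the division algorithm, write $a=qk+r$ with $0\le r<k$. The natural candidate is
$$z_2 = (0,\ldots,0,q)\ \text{ if }\ r=0, \qquad z_2 = (0,\ldots,0,1,0,\ldots,0,q)\ \text{ if }\ r>0,$$
where in the latter case the $1$ occupies the coordinate for the generator $a+rd$ and the $q$ occupies the coordinate for $a+kd$. A direct computation in the two cases gives $q(a+kd)=qa+ad=b$ and $(a+rd)+q(a+kd)=(q+1)a+(r+qk)d=(q+1)a+ad=b$, so $z_2\in\mathcal{Z}(b)$, and $|z_2|=\left\lceil\frac{a}{k}\right\rceil$ in both cases. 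Since the nonzero coordinates of $z_1$ and $z_2$ are disjoint (the former has support $\{0\}$, the latter $\{k\}$ or $\{r,k\}$ with $r\ge 1$), we get $|\gcd\{z_1,z_2\}|=0$, and therefore $d(z_1,z_2)=\max\{|z_1|,|z_2|\}=c(M)$.

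Finally, I would define $B$ by setting $Bz=z-z_1+z_2$ for any factorization $z$ whose first coordinate is at least $\left\lceil\frac{a}{k}\right\rceil+d$ (so that $z_1$ is a subfactorization of $z$). Then $Bz$ is a valid factorization of the same element, $|Bz|=|z|-d$, and $\gcd\{z,Bz\}=z-z_1$ has length $|z|-c(M)$; hence $d(z,Bz)=\max\{|z|,|z|-d\}-(|z|-c(M))=c(M)$, as required. The only real obstacle is the choice of $z_2$, which forces the brief case split on whether $k\mid a$; everything else is routine bookkeeping. Conceptually, the key point is that $c(M)$ is exactly the length of the all-$a$ factorization of $b$, and this is precisely what makes the swap drop the length by $d$ while realizing a jump of size $c(M)$.
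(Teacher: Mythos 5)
Your proof is correct, and its overall architecture matches the paper's: both arguments produce two factorizations of $b$ with disjoint supports and lengths $\lceil a/k\rceil + d$ and $\lceil a/k\rceil$, define $B$ as the swap $z\mapsto z - z_1 + z_2$, and then the length and distance computations are identical. The one genuine difference is how the short factorization is obtained. The paper deduces its existence abstractly by citing Omidali's Lemma 3.7 (to see that $b$ lies in $\langle a+d,\ldots,a+kd\rangle$) and Proposition 3.4 (to see that $\max\mathcal{L}(b)=\lceil a/k\rceil$ inside that submonoid), whereas you write the factorization down explicitly via the division algorithm $a=qk+r$, verifying directly that $q(a+kd)=b$ when $r=0$ and $(a+rd)+q(a+kd)=b$ when $r>0$. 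Your route is more elementary and self-contained at the cost of a small case split; the paper's route imports external machinery but avoids the computation. You are also slightly more careful than the paper on one point: you state explicitly that $B$ applies only to factorizations whose first coordinate is at least $\lceil a/k\rceil+d$, a restriction that the lemma's phrase ``an arbitrary factorization'' elides but that is respected wherever the paper subsequently uses $B$.
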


\begin{proof}
    First, we let $M=\langle a,a+d,\ldots,a+kd\rangle$ be an arithmetic monoid. We observe the element $y$, defined as above. Lemma 3.7 in \cite{omidali} tells us that
    $$\min\{n\in \bb{N}\cup \{0\}| na\in \langle a+d,\ldots, a+kd\rangle\}=\left\lceil\frac{a}{k}\right\rceil+d,$$
    hence $y\in\langle a+d,a+2d,\ldots, a+kd\rangle$. Additionally, within the same paper Proposition 3.4, applied to our scenario, states that for an element $x=q(a+d)+id\in\langle a+d,\ldots,a+kd\rangle$, 
    $\max L(x)=q+\left\lceil\frac{i}{k}\right\rceil(h-1)=q$. Here, the second equality comes from the fact that we are in an arithmetic monoid, hence $h=1$.
    To apply this proposition for our purposes, we can rewrite $y$ as follows
    \begin{align*}
        y&=a\left(\left\lceil\frac{a}{k}\right\rceil+d\right)\\
        &=\left\lceil\frac{a}{k}\right\rceil a+ad\\
        &=\left\lceil\frac{a}{k}\right\rceil (a+d)+\left(a-\left\lceil\frac{a}{k}\right\rceil\right)d
    \end{align*}
    thus, here $q=\lceil\tfrac{a}{k}\rceil$ and $i=\left(a-\lceil\tfrac{a}{k}\rceil\right)$. Now, we see that $\max\{\mathcal{L}(y)\}=\left\lceil\frac{a}{k}\right\rceil$. This tells us that there exists some factorization of $y$ of length $\lceil\tfrac{a}{k}\rceil$. We will call it $z_0$. Now, we use this factorization $z_0$ and the factorization $z_1=(c(M),0,\ldots,0)$ of $y$ in $M$ to construct our move $B$; namely, given $m\in M$ and $z\in\mathcal{Z}(m)$, $Bz=z-z_1+z_0$. Now, notice that $|Bz|=|z|-d$, since $|Bz|=|z-z_1+z_0|=|z|-c(M)+\lceil\frac{a}{k}\rceil=|z|-d$. All that remains is to compute the distance yielded by an application of this move. Upon inspection, we see that $\gcd(z,Bz)=(z_0-c(M),z_1,\ldots,z_k)$, so $|\gcd(z,Bz)|=|z|-c(M)$ and
    \begin{align*}
        d(z,Bz)&=|z-\gcd(z,Bz)|\\
        &=|z|-(|z|-c(M))\\
        &=c(M)
    \end{align*}
\end{proof}

\noindent \textit{Example:} We observe the monoid $M=\langle 11,15,19,23,27\rangle$ and use the element described in the previous lemma to construct a length-changing move with associated distance the catenary degree. Since $c(M)=7$ and $a=11$, our element is $b=77$ and we have $\mathcal{Z}(b)=\{(7,0,0,0,0),(0,0,0,1,2)\}$ and so we have a move $B$ such that for some factorization $z$, $Bz=z-(7,0,0,0,0)+(0,0,0,1,2)$. Now, take the element 111 in $M$ and consider the factorization $z=(7,1,1,0,0)$. Applying $B$, we see $Bz=(7,1,1,0,0)-(7,0,0,0,0)+(0,0,0,1,2)=(0,1,1,1,2)$. Now we have $|z|=9$, $|Bz|=|(0,1,1,1,2)|=5$ and $d(z,Bz)=7=c(M)$.

\begin{theorem}
    Let $M$ be an arithmetic monoid $M=\langle a,a+d,\ldots,a+kd\rangle$.Then for all elements $m\in M$ such that $c(m)\notin \{0,2\}$, 
    $$c_{adj}(m)=c(m)=c(M)$$
\end{theorem}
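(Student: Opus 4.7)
The overall plan is to split the equality chain $c_{adj}(m) = c(m) = c(M)$ into the upper bound $c_{adj}(m) \leq c(M)$ (obtained via a direct construction from Lemmas~1--4) and a matching lower bound (obtained by combining the hypothesis $c(m) \notin \{0, 2\}$ with Theorems~1 and~3). Throughout, I use the standard fact from \cite{bowles} that $\Delta(m) = \{d\}$ in any arithmetic monoid, so any two adjacent lengths $l_1 < l_2$ in $\mathcal{L}(m)$ must satisfy $l_2 - l_1 = d$.

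For the upper bound, I fix adjacent $l_1 < l_2 = l_1 + d$ in $\mathcal{L}(m)$ and use Lemma~1 to pick an optimized factorization $z \in \mathcal{Z}_{l_2}(m)$. Since $l_2 > l(m)$, Lemma~3 forces $\alpha \geq c(M) - 1$, and in fact $\alpha \geq c(M)$ throughout Case~2 and in Case~1 whenever $\lceil (i+a)/k \rceil > \lceil a/k \rceil$. In these generic sub-cases Lemma~4 applies directly, producing $Bz \in \mathcal{Z}_{l_1}(m)$ with $d(z, Bz) = c(M)$, so $d(\mathcal{Z}_{l_1}(m), \mathcal{Z}_{l_2}(m)) \leq c(M)$.

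The main obstacle is the boundary sub-case $\alpha = c(M) - 1$, in which $z = (c(M)-1, 0, \ldots, 0, 1, 0, \ldots, 0, \beta)$ with the isolated $1$ in a position $i$ satisfying $i + a \leq k\lceil a/k \rceil$, so Lemma~4 cannot be applied to $z$ directly (the first coordinate is too small by one). I plan to handle this via a modified move $B'$ that absorbs the isolated $1$ into the subtracted block $(c(M)-1, 0, \ldots, 0, 1, 0, \ldots, 0)$ of total length $c(M)$, and replaces it with a length-$(c(M) - d)$ factorization of $(c(M) - 1)a + (a + id)$ supported on the generators $a+d, \ldots, a+kd$. The existence of this replacement is governed by Lemma~2 and reduces to the inequalities $c(M) - d \leq a + i \leq k\lceil a/k \rceil$; the upper inequality is precisely the edge-case condition and the lower one is automatic, so the replacement does exist. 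A distance calculation parallel to that of Lemma~4 then yields $d(z, B'z) \leq c(M)$, completing $c_{adj}(m) \leq c(M)$.

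For the lower bound, I invoke the hypothesis $c(m) \notin \{0, 2\}$ together with structural results on arithmetic monoids (in which the element-wise catenary degree is constrained to a short list of values, with $0$ and $2$ being the only values strictly below $c(M)$) to conclude $c(m) = c(M)$. Theorems~1 and~3 then give $c(m) \leq c_{mon}(m) = \max\{c_{eq}(m), c_{adj}(m)\} = \max\{2, c_{adj}(m)\}$, and since $c(m) = c(M) > 2$ in any non-degenerate arithmetic monoid, this forces $c_{adj}(m) \geq c(M)$, matching the upper bound and yielding the full equality chain $c_{adj}(m) = c(m) = c(M)$.
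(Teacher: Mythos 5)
Your proposal is correct and takes essentially the same route as the paper: an optimized factorization at each non-minimal length, the move $B$ of Lemma~4 when $\alpha \geq c(M)$, and, in the boundary case $\alpha = c(M)-1$ with an isolated $1$ in position $i$, a replacement of the block $(c(M)-1)a+(a+id)$ by a length-$\lceil a/k\rceil$ factorization whose existence is exactly the inequality $a+i\leq k\lceil a/k\rceil$ (this is the paper's construction of $z'=(0,\ldots,0,\beta)+y$). Your explicit lower-bound argument, deducing $c_{adj}(m)\geq c(M)$ from $c(m)=c(M)>2=c_{eq}(m)$ and $c(m)\leq c_{mon}(m)=\max\{c_{eq}(m),c_{adj}(m)\}$, makes precise a step the paper asserts without comment.
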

\begin{proof}
    Take $m\in M$, where $M=\langle a,a+d,\ldots,a+kd\rangle$ and additionally, let $z\in\mathcal{Z}(m)$ such that $z$ is optimized and $|z|\neq l(m)$. First, we consider when $z$ takes the form $z=(\alpha,0,\ldots,0,\beta)$. Then, by Lemma 3, Case 2 of this paper, we have that $\alpha>\left\lceil\frac{a}{k}\right\rceil+d-1$. Now we can use our move $B$ from Lemma 4 to step down to some factorization $z'$ of $m$ where $|z'|=|z|-d$. Additionally, this yields a distance $d(z,z')=c(M)=c(m)$. Next, we consider when $z=(\alpha,0,\ldots,0,1,0,\ldots,0,\beta)$. In this case, if $\alpha\geq\left\lceil\frac{a}{k}\right\rceil+d$, then we can again use our move $B$, stepping down to a factorization $z'$ of length $|z|-d$, again giving a distance $d(z,z')=c(M)=c(m)$. Finally, we must consider the case where $z$ again takes this form, but $\alpha=\left\lceil\frac{a}{k}\right\rceil+d-1$, and so
    $$z=\left(\left\lceil\frac{a}{k}\right\rceil+d-1,0,\ldots,0,1,0,\ldots,0,\beta\right)$$
    where the 1 is understood to be in the $i$th position. Since $|z|\neq l(m)$, there exists at least one factorization of $m$ of length $\alpha+\beta+1-d$ and ore remarks from earlier tells us that $z$ satisfies the inequality
    $$k(\alpha+\beta+1-d)=k(\left\lceil\frac{a}{k}\right\rceil+\beta)\geq i+\beta k+a.$$
    Thus, expanding $m$, we can see that
    \begin{align*}
        m&=(\alpha+\beta+1)a+(i+\beta k)d\\
        &=(\alpha+\beta+1-d)a+(i+\beta k+a)d\\
        &=\left(\left\lceil\frac{a}{k}\right\rceil+\beta\right)a+(i+\beta k+a)d.
    \end{align*}
    Now we take some element $n\in M$ where $n=m-\beta(a+kd)$, so $n$ has expansion
    \begin{align*}
        n&=(\alpha+1)a+(i)d\\
        &=(\alpha+1-d)a+(i+a)d\\
        &\geq \left(\left\lceil\frac{a}{k}\right\rceil+d-1+1-d\right)a+(i+a)d\\
        &=\left\lceil\frac{a}{k}\right\rceil a+(i+a)d
    \end{align*}
    Additionally, since $z$ satisfies the inequality
    $$k\left(\left\lceil\frac{a}{k}\right\rceil+\beta\right)\geq i+\beta k+a,$$
    we immediately see that $k\left\lceil\frac{a}{k}\right\rceil\geq i+a$.
    This inequality tells us that we have a factorization of $n$ of length $\left\lceil\frac{a}{k}\right\rceil=\alpha+1-d$. Let $y$ denote a factorization of $n$ of length $\alpha+1-d$. Then we have a factorization $z'$ of $m$ of length $\alpha+\beta+1-d$ such that we can write $z'=(0,\ldots,0,\beta)+y$. Now, we compute the distance from $z$ to $z'$. 
    $$d(z,z')=|z|-|\gcd(z,z')|=\left\lceil\frac{a}{k}\right\rceil+\beta+d-|\gcd(z,z')|.$$
    All that remains is to assess the value of $|\gcd(z,z')|$. Since $z=\left(\left\lceil\frac{a}{k}\right\rceil+d-a,0,\ldots,0,1,0,\ldots,0,\beta\right)$ and $z'=(0,\ldots,0,\beta)+y$, it must be the case that $|\gcd(z,z')|\geq\beta$. Thus, returning to our computation, we achieve
    \begin{align*}
        d(z,z')&=\left\lceil\frac{a}{k}\right\rceil+\beta+d-|\gcd(z,z')|\\
        &\leq \left\lceil\frac{a}{k}\right\rceil+d\\
        &=c(M)
    \end{align*}
    Now, we have seen that for an arbitrary non-minimal factorization length $l$, we can always step from some factorization $z$ of length $l$ to a factorization $z'$ of length $l-d$, producing a distance of at most $c(m)=c(M)$. This tells us that, for our arbitrary element $m\in M$ with $c(m)\notin\{0,2\}$,
    $$c_{adj}(m)=c(m)=c(M),$$
    and our theorem is proven.
\end{proof}

\begin{theorem}
    Let $M$ be an arithmetic monoid generated by $\{a,a+d,\ldots,a+kd\}$. Now, take an element $m\in M$. If $c(m)=2$ or $c(m)=0$, then $c(m)=c_{mon}(m)$.
\end{theorem}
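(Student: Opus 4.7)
The case $c(m)=0$ is immediate, since $|\mathcal{Z}(m)|=1$ forces $c_{mon}(m)=0$. The real content is the case $c(m)=2$. There $c_{mon}(m)\geq c(m)=2$ is automatic (any monotone $N$-chain is an $N$-chain), so by Theorem 1 the reverse inequality reduces to proving $c_{eq}(m)\leq 2$ and $c_{adj}(m)\leq 2$; the former is immediate from Theorem 3, and the whole proof collapses to showing $c_{adj}(m)\leq 2$.

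I would do this by establishing the stronger statement that $c(m)=2$ forces $|\mathcal{L}(m)|=1$, which makes $c_{adj}(m)=0$ vacuously. Suppose instead that $\mathcal{L}(m)$ contains two distinct lengths and let $z=w_0,w_1,\dots,w_n=z'$ be a $2$-chain connecting factorizations of two different lengths. Along this chain at least one step has $|w_i|\neq|w_{i+1}|$, and the inequality $\bigl|\,|w_i|-|w_{i+1}|\,\bigr|\leq d(w_i,w_{i+1})\leq 2$ combined with the fact (recalled in the paper) that $\Delta(m)=\{d\}$ for arithmetic monoids forces $d\leq 2$.

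From there I would write $w_i=g+u$ and $w_{i+1}=g+v$ with $g=\gcd\{w_i,w_{i+1}\}$ and $u,v\geq 0$ of disjoint support, and enumerate the possibilities compatible with $d(w_i,w_{i+1})\leq 2$ and $\bigl|\,|w_i|-|w_{i+1}|\,\bigr|\in\{d,2d\}$. Either one of $u,v$ is zero, in which case $v-u$ is a nonzero nonnegative integer vector with $(v-u)\cdot(a,a+d,\dots,a+kd)=0$---impossible since the generators are positive---or else $d=1$, $|u|=1$, $|v|=2$, and the relation becomes a single generator equal to a sum of two generators, $a+id=(a+jd)+(a+\ell d)$ with $i\leq k$ and $j,\ell\geq 0$. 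This last sub-case (the only real obstacle) is ruled out by minimality of the generating set: when $d=1$, minimality forces $a>k$, since otherwise $2a\leq a+k$ would appear among the listed generators while equaling $a+a$; but the relation above requires $i=a+j+\ell\leq k$ and hence $a\leq k$, contradicting minimality. With $|\mathcal{L}(m)|=1$ established, Theorem 1 and Theorem 3 give $c_{mon}(m)\leq 2=c(m)$, completing the equality.
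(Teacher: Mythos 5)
Your proposal is correct, but it takes a genuinely more self-contained route than the paper at the one step that carries all the content. The paper disposes of both cases by citing Theorem 4.1 of the Chapman--Corrales--Miller--Miller--Patel paper, which states that in an arithmetic monoid $c(m)=0$ iff $|\mathcal{Z}(m)|=1$ and $c(m)=2$ iff $|\mathcal{L}(m)|=1$; once $|\mathcal{L}(m)|=1$ is in hand, the paper concludes exactly as you do, by observing that every chain is then monotone (equivalently, via $c_{mon}(m)=\max\{c_{eq}(m),c_{adj}(m)\}$ with $c_{adj}$ vacuous). You instead prove the implication $c(m)=2\Rightarrow|\mathcal{L}(m)|=1$ from scratch: the bound $\bigl|\,|w_i|-|w_{i+1}|\,\bigr|\le d(w_i,w_{i+1})$ together with $\Delta(m)=\{d\}$ forces any length-changing step of a $2$-chain to satisfy $d\le 2$, and your case analysis on $u,v$ with disjoint support correctly reduces the only surviving configuration to a relation $a+id=(a+jd)+(a+\ell d)$ with $d=1$, which forces $a\le i\le k$ and contradicts minimality of the generating set (since $a\le k$ would put $2a=a+a$ among the listed generators). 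Each sub-case is airtight: the "one of $u,v$ zero" branch dies because a nonzero nonnegative combination of positive generators cannot vanish, and the divisibility of all length differences by $d$ is exactly what the cited delta-set result of Bowles et al.\ provides. What your approach buys is independence from the external citation (and, implicitly, a proof of the cited equivalence in one direction); what the paper's approach buys is brevity. Both are valid.
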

\begin{proof}
    If $c(m)=0$, Theorem 4.1 from \cite{corrales} tells us that $|\mathcal{Z}(m)|=1$, so it must be that $c_{mon}(m)=0=c(m)$. If, instead, $c(m)=2$, then this same theorem tells us that $|\mathcal{L}(m)|=1$, hence every chain of factorizations is monotone, since all factorizations are of the same length. In this case, $c_{mon}(m)=c_{eq}(m)=2=c(m)$.
\end{proof}

\begin{theorem}
    Given an arithmetic monoid $M=\langle a,a+d,\ldots,a+kd\rangle$, for all $m\in M$,
    $$c_{mon}(m)=c(m).$$
    As a result of this, we also have that $c_{mon}(M)=c(M)$.
\end{theorem}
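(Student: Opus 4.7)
The plan is to obtain this theorem as an immediate consequence of Theorem 1 together with the two preceding theorems of this section. By Theorem 1, any $m\in M$ satisfies $c_{mon}(m)=\max\{c_{eq}(m),c_{adj}(m)\}$, so it suffices to evaluate this maximum in each case of $c(m)$.

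First I would fix $m\in M$ and split on whether $c(m)\in\{0,2\}$. If $c(m)\in\{0,2\}$, the preceding theorem directly yields $c_{mon}(m)=c(m)$, so nothing further is needed. Otherwise $c(m)\notin\{0,2\}$, and a short observation shows that in fact $c(m)\geq 3$: if $z\neq z'$ are distinct factorizations then $d(z,z')\geq 2$, since the only way to have $d(z,z')=1$ would be to ``trade'' a single generator for another single generator, forcing two generators to coincide and contradicting minimality of the generating set. Hence the catenary degree never attains the value $1$. In this remaining case, Theorem 3 gives $c_{eq}(m)=2$, while the preceding theorem on $c_{adj}$ gives $c_{adj}(m)=c(m)=c(M)$. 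Because $c(m)\geq 3>2$, I would conclude
$$c_{mon}(m)=\max\{c_{eq}(m),c_{adj}(m)\}=\max\{2,c(m)\}=c(m).$$

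The set-wise statement then follows immediately from the pointwise one by taking the maximum over $m\in M$: $c_{mon}(M)=\max_{m\in M} c_{mon}(m)=\max_{m\in M} c(m)=c(M)$. The main ``obstacle'' here is purely organizational, since all of the substantive work has been done in the earlier theorems of this section; the only non-trivial verification is that $c(m)\notin\{0,2\}$ forces $c(m)\geq 3$ (rather than permitting $c(m)=1$), which in turn ensures $c_{eq}(m)$ is dominated by $c_{adj}(m)$ so the Theorem 1 maximum collapses cleanly to $c_{adj}(m)=c(m)$.
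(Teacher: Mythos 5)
Your proposal is correct and follows essentially the same route as the paper: handle $c(m)\in\{0,2\}$ via the preceding theorem, and otherwise combine $c_{mon}(m)=\max\{c_{eq}(m),c_{adj}(m)\}$ with $c_{eq}(m)=2$ and $c_{adj}(m)=c(m)=c(M)$, then pass to the set-wise statement by taking maxima. Your added remark that $c(m)\ne 1$ (so $c(m)\notin\{0,2\}$ really forces $c(m)>2$) is a small justification the paper leaves implicit, but it does not change the argument.
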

\begin{proof}
    First, consider when $c(m)\in\{0,2\}$. In this case, Theorem 4 tells us that $c_{mon}(m)=c(m)$. Now, consider when $c(m)\notin\{0,2\}$, hence $c(m)>2$. Here, Theorem 3 tells us that $c_{adj}(m)=c(m)=c(M)$. Because $c_{mon}(m)=\max\{c_{eq}(m),c_{adj}(m)\}$ and $c_{eq}(m)=2$, it must be that $c_{mon}(m)=c_{adj}(m)=c(m)$. Finally, since we now have that $c_{mon}(m)=c(m)$ for all elements $m\in M$, it must also be that $c(M)=c_{mon}(M)$.
\end{proof}


\section{Generalized Arithmetic Monoids}

We have now seen that, for arithmetic monoids, the regular and monotone catenary degrees are always equal. The situation for
generalized arithmetic monoids is significantly more nuanced. In order to simplify our considerations, we restrict our discussion to
embedding dimension 3 monoids of the form $M = \left \langle a,ah + d,ah + 2d \right \rangle$ where
$h \in \mathbb{N} \setminus \{0, 1\}$.

In general, we know \cite{bowles} that the delta set of any numerical monoid has smallest element $\gcd(h - 1, d)$ and largest element
$\left \lceil \frac{a}{2} \right \rceil (h - 1) + d$. In regular arithmetic monoids, we recall that these two values are equal to the
step size of the arithmetic sequence generating the monoid. However, in generalized arithmetic monoids, these two values are never
equal, leading to unpredictable delta sets and complicated distances between adjacent factorizations. This important difference
makes the calculation of bounds for the adjacent catenary degree more involved than in regular arithmetic sequences, leading to
a wider range of values that the adjacent catenary degree can take, which affects what values the monotone catenary can take.

Another important difference is that the equivalent catenary degree in arithmetic monoids is \textit{always} 2. The equivalent
catenary degree in generalized arithmetic monoids, however, is illustrated by the following theorem:

\begin{theorem}
    For a generalized arithmetic monoid M in embedding dimension 3, $$\displaystyle c_{eq}(M) = \frac{ah + 2d - a}{\gcd(h - 1, d)}.$$
\end{theorem}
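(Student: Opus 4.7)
The plan is to identify the lattice of length-preserving trades in embedding dimension $3$ and exploit the fact that it has rank one. Set $g := \gcd(h-1,d)$, $u := a(h-1)+d$, and $v := a(h-1)+2d$. Because $M$ is a numerical monoid, $\gcd(a,ah+d,ah+2d)=\gcd(a,d)=1$, which gives $\gcd(u,v)=\gcd(a(h-1),d)=\gcd(h-1,d)=g$. A vector $T\in\mathbb{Z}^3$ encodes a length-preserving, element-preserving trade exactly when $T_1+T_2+T_3=0$ and $T_1a+T_2(ah+d)+T_3(ah+2d)=0$; eliminating $T_1$ reduces this to $T_2u+T_3v=0$, and since $\gcd(u/g,v/g)=1$, the integer solutions form a rank-$1$ sublattice generated by $T^*:=(d/g,\,-v/g,\,u/g)$. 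Using $d+u=v$, a single application of $\pm T^*$ swaps $v/g$ units of weight between coordinates, so any factorization $z'=z+T^*$ satisfies $d(z,z')=v/g$.

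For the upper bound $c_{eq}(M)\le v/g$, I would fix $m\in M$ and $z,z'\in\mathcal{Z}(m)$ with $|z|=|z'|$. The difference $z'-z$ lies in the kernel described above, so $z'=z+kT^*$ for some $k\in\mathbb{Z}$; replacing $T^*$ by $-T^*$ if needed, assume $k\ge 0$. Define $z_j:=z+jT^*$ for $0\le j\le k$; this is an equivalent chain from $z$ to $z'$, and the only non-negativity concern is the middle coordinate, for which $(z_j)_2=z_2-jv/g\ge z_2-kv/g=(z')_2\ge 0$. The first and third coordinates only increase along the chain. Each consecutive step has distance $v/g$, establishing the upper bound.

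For the lower bound, I would exhibit an element realizing equality. Take $m:=(v/g)(ah+d)$. Then $z:=(0,v/g,0)$ factors $m$ with length $v/g$, and the trade identity $(d/g)\cdot a+(u/g)\cdot(ah+2d)=(v/g)\cdot(ah+d)$ shows $z':=(d/g,\,0,\,u/g)=z+T^*$ is another factorization of $m$ of length $v/g$. By the kernel description, every length-$v/g$ factorization of $m$ has the form $z+kT^*$; non-negativity of the middle coordinate forces $k\in\{0,1\}$, so $z$ and $z'$ are the only two such factorizations. Any equivalent chain between them must therefore be the single edge $z\to z'$, of distance $v/g$, which gives $c_{eq}(m)\ge v/g$ and hence $c_{eq}(M)\ge v/g$. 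Combined with the upper bound, this yields the stated equality.

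The main obstacle I expect is the computation identifying $\gcd(u,v)$ with $g$ and confirming that the generator $T^*$ of the kernel has distance exactly $v/g$; once those are in hand, both the monotone chaining argument and the minimality example are essentially bookkeeping in $\mathbb{Z}^3$.
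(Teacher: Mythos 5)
Your proposal is correct and follows essentially the same route as the paper: both identify the rank-one lattice of length- and element-preserving trades generated by $T^*=(d/g,\,-(a(h-1)+2d)/g,\,(a(h-1)+d)/g)$ and chain through consecutive multiples of $T^*$, each step costing $(a(h-1)+2d)/g$. Your version is in fact slightly more complete, since you supply an explicit witness element $m=\tfrac{a(h-1)+2d}{g}(ah+d)$ forcing the lower bound, a point the paper's proof leaves implicit (and note the paper's stated per-step distance $k\tfrac{a(h-1)+d}{\gcd(h-1,d)}$ appears to be a typo for $k\tfrac{a(h-1)+2d}{\gcd(h-1,d)}$, consistent with your computation and with the theorem's conclusion).
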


\begin{proof}
    Observe that, given a factorization $z = (z_1, z_2, z_3) \in \mathcal Z(g)$, all factorizations of length $|z|$
    have the following form (for $k \in \mathbb Z$):
    $$\left (z_1 + k \frac{d}{\gcd(h - 1,d)},
    z_2 - k \frac{ah + 2d - a}{\gcd(h - 1,d)},
    z_3 + k \frac{ah + d - a}{\gcd(h - 1d)} \right ).$$
    This transformation produces a distance of
    $$\displaystyle k\frac{a(h - 1) + d}{\gcd(h - 1,d)}$$
    between the two factorizations. Therefore, the minimum $n$-chain that can be constructed between factorizations of the same
    length is the one produced by $k \in \{1, -1\}$ since whenever we have factorizations of the same length related by the above
    transformation for a value of $k \geq 1$, we must have factorizations ``between" them corresponding to values of $k$ less than
    that one (similarly for values of $k \leq -1$ and factorizations with $k$\--values greater), so that a
    $\frac{ah + 2d - a}{\gcd(h - 1,d)}$-chain can be constructed connecting them. Therefore, the minimum
    $n \in \mathbb N$ such that there is an $n$-chain between any two factorizations of the same length is
    $$\frac{ah + 2d - a}{\gcd(h - 1,d)}.$$
\end{proof}

The dependence of the equivalent catenary degree on $\gcd(h - 1, d)$ means that when $\gcd(h - 1, d) = 1$ the equivalent catenary
degree can more easily overcome the adjacent catenary degree (since it will take on a ``large" value) compared to when
$\gcd(h - 1, d) > 1$, leading the monotone catenary degree to be maxed out by either the equivalent or the adjacent catenary degree
depending on the value of $\gcd(h - 1, d)$.

One should note that the above length\--preserving transformation is a special case of the one that acts on the indeces of
$z$ in the following way:
$$ \left ( z_1 - k \frac{n_3 - n_1}{n_2 - n_1} - k - l
,z_2 + k \frac{n_3 - n_1}{n_2 - n_1}
,z_3 + k\right ),$$
which describes the co\--ordinates of all factorizations with length $|z| + l$ (where $k \in \mathbb Z$ is such that the coordinates
are non\--negative).
In order to ensure $c(M) < c_{mon}(M)$ we must meet two conditions. We must first show that, for all $m \in M$ that, between any two
factorizations $z, z' \in \mathcal Z(m)$ such that $|z| = |z'|$, there exists a third factorization $y \in \mathcal Z(m)$ adjacent in
length to $z$ and $z'$ such that $d(z, y) < c_{eq}(m)$ and $d(z', y) < c_{eq}(m)$. If we additionally have that
$c_{eq}(M) > c{adj}(M)$, then we will be able to conclude that $c(M) < c_{mon}(M)$.

The following pair of conjectures characterize the regular and monotone catenary degrees of generalized arithmetic monoids
depending on the different values assumed by $\gcd(h - 1, d)$ and $c_{eq}(M)$:

\begin{conjecture}
    If $\gcd(h - 1,d) > 1$, then $c_{mon}(M) = c(M)$.
\end{conjecture}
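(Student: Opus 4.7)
The plan is to use Theorem 1, which reduces the conjecture to showing $c_{eq}(M) \leq c(M)$ and $c_{adj}(M) \leq c(M)$; the reverse inequality $c_{mon}(M) \geq c(M)$ is automatic since every chain is permitted in the definition of the catenary degree. The hypothesis $g := \gcd(h - 1, d) \geq 2$ will be essential in the first bound and a useful structural tool in the second.

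For the equivalent catenary degree, Theorem 5 gives $c_{eq}(M) = (a(h - 1) + 2d)/g$. For $c(M)$, I would use the lower bound coming from the fact (cited from \cite{bowles} in the excerpt) that $\max \Delta(M) = \lceil a/2 \rceil (h - 1) + d$, together with the standard inequality $c(M) \geq \max \Delta(M)$ valid for numerical monoids. When $g \geq 2$ this gives
\[
c_{eq}(M) = \frac{a(h-1) + 2d}{g} \;\leq\; \frac{a(h-1) + 2d}{2} \;=\; \frac{a(h-1)}{2} + d \;\leq\; \left\lceil \frac{a}{2} \right\rceil (h - 1) + d \;\leq\; c(M),
\]
so the equivalent bound comes almost for free. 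The chain breaks precisely at $g = 1$, matching the expectation that $c_{eq}$ can dominate in that regime.

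For the adjacent catenary degree, I would mirror the arithmetic monoid program of Section~4. Using the length-preserving transformation displayed after Theorem 5, I would define an ``optimized'' normal form for factorizations of any fixed length (minimizing the middle coordinate), producing the analogue of our earlier optimized factorizations. Then, analogously to Lemma 4, I would identify a special element $b \in M$ with a pair of factorizations realizing a large gap in $\mathcal L(b)$, and translate this into a length-decreasing move $B$ of distance at most $c(M)$. Finally, by case analysis on the shape of the optimized factorization, I would argue that $B$ (or one of a small family of related moves) can be applied from any non-minimum-length factorization to one adjacent in length, yielding $d(\mathcal Z_a, \mathcal Z_b) \leq c(M)$ for each adjacent $a, b \in \mathcal L(m)$.

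The main obstacle is this adjacent bound. In the arithmetic case one exploits that $\Delta(M) = \{d\}$ is a singleton, so a single move $B$ matches every length gap; in the generalized case $\Delta(M)$ can span a wide range between $g$ and $\lceil a/2 \rceil(h-1) + d$, so one move alone is not enough. I would therefore need a family of moves keyed to the possible gaps appearing in $\Delta(M)$, and verify that each one has distance at most $c(M)$. Controlling this family uniformly across the parameter space $(a,h,d)$ is the delicate part, and I expect the hypothesis $g > 1$ to resurface here beyond its role in the $c_{eq}$ bound, since it forces the two non-$a$ generators to share an arithmetic pattern that makes the ``in-between'' factorization lengths behave coherently and keeps each required move within the catenary degree.
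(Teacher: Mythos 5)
You should first note that the paper does not prove this statement at all: it is left as a conjecture, so there is no proof of record to compare against, and your attempt has to stand on its own. Your overall strategy is the right one and is exactly the framework the paper sets up: by Theorem 1, $c_{mon}(M)=\max\{c_{eq}(M),c_{adj}(M)\}$, so the conjecture reduces to $c_{eq}(M)\le c(M)$ and $c_{adj}(M)\le c(M)$. Your $c_{eq}$ half is essentially complete and is the place where the hypothesis $\gcd(h-1,d)\ge 2$ genuinely enters: dividing $a(h-1)+2d$ by $g\ge 2$ pushes $c_{eq}(M)$ below $\left\lceil a/2\right\rceil(h-1)+d=\max\Delta(M)$, and the standard bound (which is in fact $c(M)\ge \max\Delta(M)+2$, slightly stronger than what you quoted) finishes it. Two small caveats: you should cite the $c\ge\max\Delta+2$ inequality properly rather than calling $c\ge\max\Delta$ "standard," and be aware that the paper's own Lemma 6 suggests the denominator in Theorem 5 should really be $\gcd(a(h-1),d)$ rather than $\gcd(h-1,d)$; since $\gcd(h-1,d)$ divides $\gcd(a(h-1),d)$, this only makes $c_{eq}(M)$ smaller and your bound survives either way.

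The genuine gap is the $c_{adj}(M)\le c(M)$ half. What you have written there is a research program, not an argument: you propose to build an analogue of the optimized-factorization machinery of Section 4 and a family of length-decreasing moves keyed to every gap in $\Delta(M)$, and you yourself flag that controlling this family uniformly in $(a,h,d)$ is unresolved. As it stands, nothing in the proposal establishes the adjacent bound, so the proof is incomplete. The good news is that this entire half is unnecessary: the inequality $c_{adj}(m)\le c(m)$ holds for every element of every atomic monoid and is available in the literature the paper already cites (\cite{philip}, \cite{geroldinger}), independently of any hypothesis on $\gcd(h-1,d)$. Invoking it gives the clean general identity $c_{mon}(M)=\max\{c(M),c_{eq}(M)\}$, after which your $c_{eq}$ computation alone proves Conjecture 1 (and, incidentally, explains the case structure of Conjecture 2). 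I would recommend replacing your adjacent-degree program with that citation; if you insist on a self-contained argument for $c_{adj}\le c$, you need to supply the actual crossing argument for adjacent lengths, not a plan for one.
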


\begin{conjecture}
    If $\gcd(h - 1,d) = 1$, there are several cases:\\
    If $h < d$, then we have that $c(M) < c_{mon}(M)$.\\
    If $h \geq d$ and $c(M) < c_{eq}(M)$, then $c(M) < c_{mon}(M)$.\\
    If $h \geq d$ and $c(M) = c_{eq}(M)$, then $c(M) = c_{mon}(M)$.
\end{conjecture}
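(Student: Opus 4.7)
The unifying framework is Theorem~1 applied monoid-wise,
\[
c_{mon}(M) = \max\{c_{eq}(M),\, c_{adj}(M)\},
\]
combined with Theorem~7, which under $\gcd(h-1,d)=1$ specializes to $c_{eq}(M) = a(h-1)+2d$. Since $c_{mon}(M)\ge c(M)$ holds in general, the conjecture is really a comparison of $c(M)$ against the two constituents on the right-hand side.

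For the strict-inequality cases 1 and 2, the plan is to show $c_{eq}(M)>c(M)$; once established, Theorem~1 immediately gives $c_{mon}(M) \geq c_{eq}(M) > c(M)$. Case 2 assumes this hypothesis directly. For Case 1, I would combine the formula $c_{eq}(M)=a(h-1)+2d$ with the formula for $c(M)$ of a generalized arithmetic monoid in embedding dimension 3 (computable from the Betti elements in the spirit of \cite{omidali} and \cite{chapman}) and check that $h<d$ forces $a(h-1)+2d > c(M)$; intuitively, in the $h<d$ regime the additive term $2d$ dominates the other contributions to the catenary degree. A small sanity check: $M=\langle 3,11,16\rangle$ (where $a=3$, $h=2$, $d=5$, so $h<d$) yields $c_{eq}(M)=13$, whereas direct computation of distances at the Betti elements $33$ and $48$ gives $c(M)\leq 9$, matching the conjecture.

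The hard part is Case 3 ($h\ge d$ and $c(M)=c_{eq}(M)$), which requires the reverse inequality $c_{mon}(M)\le c(M)$. Since $c_{eq}(M)=c(M)$ by hypothesis, this reduces to proving $c_{adj}(M)\le c(M)$, and the plan mirrors the strategy of Section~4. One defines an analogue of an \emph{optimized} factorization for $M=\langle a,ah+d,ah+2d\rangle$, shows that every length class contains an optimized representative via a Lemma~1-style induction using the index-shifting transformation displayed before Conjecture~1, and then builds a Lemma~4-style length-changing move from a minimal relation of the form $\lambda_1 a = \lambda_2(ah+d) + \lambda_3(ah+2d)$. The hypothesis $h\ge d$ would furnish the analogue of Lemma~3, forcing the first coordinate of any optimized non-minimal-length factorization to be large enough for the move to be applied legally. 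The main obstacle is that, unlike the arithmetic setting, the delta set of a generalized arithmetic monoid is not a single value: its minimum is $\gcd(h-1,d)=1$ and its maximum is $\lceil a/2\rceil(h-1)+d$. A single ``step down by $d$'' move therefore cannot bridge every adjacent pair of lengths, and one must orchestrate a family of moves of various step sizes while keeping each pairwise distance at most $c(M)$. Verifying that every adjacent gap in $\mathcal{L}(m)$ admits such a move, and that these moves remain compatible with the optimized form, is the technical heart of Case 3.
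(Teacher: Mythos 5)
You should first note that this statement is Conjecture~2 of the paper: the authors supply no proof (their intended route is signalled only by the surrounding discussion, namely the two-condition criterion of Theorem~8 together with the still-unproven ``bypass'' statement of Conjecture~3), so there is no argument of record to measure yours against. Your framework is nonetheless the right one and matches the authors' intent: reduce everything to $c_{mon}(M)=\max\{c_{eq}(M),c_{adj}(M)\}$ from Theorem~1 and to the specialization $c_{eq}(M)=a(h-1)+2d$ of Theorem~7 when $\gcd(h-1,d)=1$. Your treatment of the second case is genuinely complete: the hypothesis $c(M)<c_{eq}(M)$ gives $c_{mon}(M)\ge c_{eq}(M)>c(M)$ with nothing left to verify.

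The other two cases still contain real gaps, so what you have is a sound reduction and a plausible plan rather than a proof. For $h<d$ you assert, but do not establish, that $a(h-1)+2d>c(M)$; this needs an actual formula for $c(\langle a,ah+d,ah+2d\rangle)$ from \cite{omidali} together with its hypotheses, and a verified inequality, not one worked example. (If that inequality does hold for all $h<d$, then your Case~1 is simply an instance of Case~2 --- which is probably what the authors intend --- but the verification is the entire content of the case.) Case~3 is where the conjecture actually lives, and your proposal names the difficulties without resolving them. Two specific points. First, you correctly reduce to $c_{adj}(M)\le c(M)$, but this inequality does not follow formally from the definitions: a $c(m)$-chain crossing an adjacent pair of lengths $a<b$ only bounds the distance between \emph{some} factorization of length at most $a$ and \emph{some} factorization of length at least $b$, not $d(\mathcal{Z}_a,\mathcal{Z}_b)$ itself, so the bound must come from an explicit construction. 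Second, as you observe, $\Delta(m)$ is no longer a singleton --- its elements range from $\gcd(h-1,d)=1$ up to $\lceil a/2\rceil(h-1)+d$ --- so the single Lemma~4-style move of Section~4 cannot bridge every adjacent gap; a family of moves realizing every gap size while keeping each distance at most $c(M)$ is required, and none is exhibited. Until those two items are supplied, Case~3 (and hence the conjecture) remains open.
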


Furthering our support for the above conjectures, we believe that in the case that $\gcd(h - 1 ,d) = 1$, the first condition required
for $c(M) < c_{mon}(M)$ is true; this is illustrated by the following conjecture:

\begin{conjecture}
    Assume $\gcd(h - 1,d) = 1$ and that there exist $z, z' \in \mathcal Z(m)$ such that $z \neq z'$ and $|z| = |z'|$ and
    $z_1 > z'_1$. Then, there exists $f \in \mathcal Z(m)$ such that
    $$|f| = |z| + 1 \text{ and } d(f, z) \leq d(z, z') \text{ and } d(f, z') \leq d(z, z').$$
    Further,
    \begin{align*}
        \text{if } h \leq d, \text{ then } 
        \displaystyle &f = \left (z_1 + \left \lceil \frac{a}{2} \right \rceil
        ,z_2 - \frac{a - \frac{ah + d - \left \lceil \frac{a}{2} \right \rceil (ah + d - a)}{d}}{ah + d - a}
        ,z_3 - \frac{ah + d - \left \lceil \frac{a}{2} \right \rceil (ah + d - a)}{d} \right ),
        \\
        \text{if } h > d, \text{ then }
        \displaystyle &f = \left (z_1 + \left \lfloor \frac{a}{2} \right \rfloor
        ,z_2 - \frac{a - \frac{ah + d - \left \lfloor \frac{a}{2} \right \rfloor (ah + d - a)}{d}}{ah + d - a}
        ,A z_3 - \frac{ah + d - \left \lfloor \frac{a}{2} \right \rfloor (ah + d - a)}{d} \right ).
    \end{align*}
\end{conjecture}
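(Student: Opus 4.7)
The plan is a direct verification of the explicit formula for $f$: I would parameterize the length-$|z|$ factorizations of $m$ using the transformation from Theorem 6, construct $f$ as prescribed, and then compute $d(f,z)$ and $d(f,z')$ in closed form and compare with $d(z,z')$.

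For the setup, since $\gcd(h-1,d)=1$, the proof of Theorem 6 shows that $\mathcal Z_{|z|}(m)$ is the set of non-negative lattice points on the line $z+\mathbb Z\cdot(d,-(ah+2d-a),ah+d-a)$. The hypothesis $z_1>z'_1$ then forces $z'=z+t(-d,\,ah+2d-a,\,-(ah+d-a))$ for a unique integer $t\geq 1$, and a direct $\gcd$ computation gives $d(z,z')=t(ah+2d-a)$. Hence the targets reduce to $d(f,z)\leq t(ah+2d-a)$ and $d(f,z')\leq t(ah+2d-a)$.

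Next I would check that the prescribed $f$ is a valid factorization. Writing $\alpha=\lceil a/2\rceil$ in the $h\leq d$ case and $\alpha=\lfloor a/2\rfloor$ in the $h>d$ case, and setting $A=\tfrac{ah+d-\alpha(ah+d-a)}{d}$, the candidate is $f=(z_1+\alpha,\,z_2-\tfrac{a-A}{ah+d-a},\,z_3-A)$. Both identities $f_1+f_2+f_3=|z|+1$ and $f_1 a+f_2(ah+d)+f_3(ah+2d)=m$ collapse to the single relation $Ad=\alpha a+(1-\alpha)(ah+d)$, which is immediate from the definition of $A$. The subtler issue is integrality of the entries of $f$: I would verify $A\in\mathbb Z$ via a residue computation modulo $d$ that invokes $\gcd(h-1,d)=1$, and $(a-A)/(ah+d-a)\in\mathbb Z$ via a residue computation modulo $ah+d-a$. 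The dichotomy $h\leq d$ versus $h>d$ enters precisely here, since the parity of $\alpha$ that hits the required residue class flips at $h=d$. Positivity of $f$'s entries then follows from the existence of $z'$: in particular $z'_3\geq 0$ forces $z_3\geq t(ah+d-a)$, which is enough to dominate $A$, while a dual argument bounds $(a-A)/(ah+d-a)$ against $z_2$.

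The hardest step will be the distance bound. Once positivity is secured, the sign pattern $f_1>z_1>z'_1$, $f_3\geq z_3>z'_3$, $f_2\leq z_2<z'_2$ is visible from the formulas, so the $\gcd$-of-factorizations computation becomes mechanical and produces closed forms for $d(f,z)$ and $d(f,z')$ in terms of $\alpha$, $A$, and $t$. What remains is the comparison of the ``offset'' $\alpha-A$ with the ``budget'' $t(ah+2d-a)$ on both sides; the $d(f,z')$ side is the delicate one, since the $\gcd$ picks up additional overlap from $z'$ that must be carefully extracted. This is precisely where the choice of $\lceil\cdot\rceil$ versus $\lfloor\cdot\rfloor$ pays off: it minimizes $|\alpha-A|$ in its regime, ensuring that both inequalities can hold simultaneously. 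I expect the bulk of the argument to live in these final estimates, handled separately for $h\leq d$ and $h>d$.
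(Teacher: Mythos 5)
First, note that this statement is labeled a \emph{conjecture}: the paper offers no proof of it, so there is no argument of the authors to compare yours against. What you have written is in any case a plan rather than a proof --- integrality, nonnegativity, and both distance inequalities are all deferred (``I would verify\dots'', ``I expect the bulk of the argument to live in these final estimates'') --- and at least one of the deferred steps is false. Writing $\alpha=\lceil a/2\rceil$ and $A=\frac{ah+d-\alpha(ah+d-a)}{d}$, the quantity $A$ need not be an integer: for $(a,h,d)=(5,2,3)$, so $M=\langle 5,13,16\rangle$ with $\gcd(h-1,d)=1$ and $h\le d$, one gets $A=\frac{13-24}{3}=-\frac{11}{3}$. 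The prescribed $f$ is then not a lattice point at all, so the strategy of ``direct verification of the explicit formula'' cannot succeed without either correcting the formula or adding hypotheses. The residue computation you plan to do modulo $d$ would have to show $a\bigl(h-\alpha(h-1)\bigr)\equiv 0\pmod d$, a congruence condition on $\alpha$ that the fixed choice $\alpha=\lceil a/2\rceil$ does not satisfy in general, $\gcd(h-1,d)=1$ notwithstanding.

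There is also a slip in the one computation you claim to have completed. For $f=(z_1+\alpha,\,z_2-B,\,z_3-A)$ to satisfy both $|f|=|z|+1$ and $f\in\mathcal Z(m)$, one needs $B=\alpha-1-A=\frac{a-A(ah+2d-a)}{ah+d-a}$, not $B=\frac{a-A}{ah+d-a}$ as the conjecture (and your candidate) has it; the two identities collapse to $Ad=\alpha a+(1-\alpha)(ah+d)$ only after this correction, so your verification silently uses a different $f$ than the one you wrote down. Your positivity argument is likewise incomplete: $f_2\ge 0$ requires $z_2\ge \alpha-1-A$, and since $A$ is typically a large negative number (of order $-a^2h/(2d)$), this is a substantial lower bound on $z_2$ that does not follow from the mere existence of $z'$ with $z'_1<z_1$. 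Finally, the two distance inequalities --- the actual content of the conjecture --- are nowhere established. As it stands, the proposal neither proves the statement nor detects that its explicit formula, read literally, is false.
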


\section {Conditions for Strict Inequality}
Recall that for all numerical monoids $M$, $c_{mon}(M) \geq c(M)$. Likewise, for any element $m \in M$, $c_{mon}(m) \geq c(m)$. We have seen that in arithmetic monoids, the regular and monotone catenary degrees are equal both element-wise and set-wise. However, in other classes of monoids, we observe that $c_{mon}(M) > c(M)$. In fact, we can place certain restrictions on our monoid to ensure that this strict inequality holds.  
\begin{theorem}
    Given the following two conditions, $c_{mon}(m)>c(m)$ for any element $m$ in any monoid $M$.
    \begin{itemize}
        \item $c_{eq}(m)>c_{adj}(m)$ 
        \item If $m$ has two factorizations $z_1$ and $z_2$ of length $l$, there exists a factorization $z_3$ of length $q \neq l$ such that $d(z_1,z_3)<c_{eq}(m)$ and $d(z_2,z_3)<c_{eq}(m)$. 
    \end{itemize}
\end{theorem}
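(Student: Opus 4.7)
\noindent\textit{Proof plan.} The plan is to combine Theorem 1 with the two hypotheses in order to separate $c(m)$ strictly from $c_{mon}(m)$. First, Theorem 1 together with hypothesis (i) gives $c_{mon}(m) = \max\{c_{eq}(m), c_{adj}(m)\} = c_{eq}(m)$, so it suffices to show that for any pair $z, z' \in \mathcal{Z}(m)$ there exists a (not necessarily monotone) chain from $z$ to $z'$ whose every edge has distance strictly less than $c_{eq}(m)$; that will force $c(m) < c_{eq}(m) = c_{mon}(m)$.

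To build such a chain, I would start from a monotone $c_{mon}(m)$-chain $z = f_0, f_1, \ldots, f_k = z'$, refined as in Case 3 of the proof of Theorem 1 so that each edge is either an equal-length edge (with distance $\leq c_{eq}(m)$) or an adjacent-length edge (with distance $\leq c_{adj}(m)$). The adjacent-length edges already satisfy $d(f_i, f_{i+1}) \leq c_{adj}(m) < c_{eq}(m)$ by hypothesis (i), so only the equal-length edges could possibly carry distance as large as $c_{eq}(m)$. For each equal-length edge $f_i \to f_{i+1}$ with $f_i \neq f_{i+1}$, I would apply hypothesis (ii) with $(z_1, z_2) = (f_i, f_{i+1})$ to obtain a factorization $z_3$ of length $q \neq |f_i|$ with both $d(f_i, z_3) < c_{eq}(m)$ and $d(f_{i+1}, z_3) < c_{eq}(m)$, and then replace the direct edge by the two-edge detour $f_i \to z_3 \to f_{i+1}$. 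Performing this surgery at every equal-length edge yields a chain from $z$ to $z'$ whose maximum step distance is strictly less than $c_{eq}(m)$, which is the desired conclusion.

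The main subtlety I expect is that inserting $z_3$ breaks the monotonicity of the chain, since $z_3$ lies at a different length from its neighbors. This is precisely the asymmetry being exploited: hypothesis (ii) provides a shortcut for equal-length edges that is only available to non-monotone chains, while hypothesis (i) guarantees that the adjacent-length edges never saturate the bound $c_{eq}(m)$. Beyond that, the only bookkeeping requiring care is the degenerate case $f_i = f_{i+1}$ (where no edge is needed and no detour is inserted) and the verification that hypothesis (ii) is genuinely applicable at every equal-length pair in the chain, which is immediate from its statement since it applies to any two factorizations of $m$ of equal length.
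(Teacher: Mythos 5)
Your proposal is correct and follows essentially the same route as the paper: use hypothesis (i) to pin $c_{mon}(m)=c_{eq}(m)$, cross between adjacent lengths with distance at most $c_{adj}(m)<c_{eq}(m)$, and use hypothesis (ii) to detour around each equal-length edge, yielding a chain all of whose step distances are strictly below $c_{eq}(m)$, hence $c(m)<c_{eq}(m)=c_{mon}(m)$. Your write-up is in fact more careful than the paper's, which loosely asserts the constructed chains are ``monotone'' even though the detours through $z_3$ necessarily break monotonicity --- precisely the subtlety you flag.
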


\begin{proof}
    Let $M$ be a numerical monoid with $c_{eq}(m)>c_{adj}(m)$ for some $m \in M$, and guarantee that if $m$ has two factorizations $z_1$ and $z_2$ of length $l$, there exists a factorization $z_3$ of length $q \neq l$ such that $d(z_1,z_3)<c_{eq}(m)$ and $d(z_2,z_3)<c_{eq}(m)$. We claim that we can connect any two factorizations of $m$ with monotone $N$-chains such that $N<c_{eq}(m)$. Since $c_{eq}(m)>c_{adj}(m)$, we can connect a factorization of each length to one of an adjacent length with distance less than $c_{eq}(m)$. Additionally, our other constraint ensures that we can get from two factorizations of length $l$ to one of a lower length with distance less than $c_{eq}(m)$.  \\
    \indent Thus if our two conditions our met, we can connect any two factorizations of $m$ with monotone $N$-chains such that $N<c_{eq}(m)$. Then $c(m)<c_{eq}(m)$ and $c_{mon}(m)=\max\{c_{eq}(m),c_{adj}(m)\}=c_{eq}(m)$. Hence, $c_{mon}(m)>c(m)$.
\end{proof}

\begin{corollary}
    Given that the following two conditions hold, $c_{mon}(M)>c(M)$ for any monoid $M$.
    \begin{itemize}
        \item $c_{eq}(M)>c_{adj}(M)$ 
    \item If $m \in M$ has two factorizations $z_1$ and $z_2$ of length $l$, there exists a factorization $z_3$ of length $q \neq l$ such that $d(z_1,z_3)<c_{eq}(m)$ and $d(z_2,z_3)<c_{eq}(m)$. \end{itemize}
\end{corollary}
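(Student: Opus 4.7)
The plan is to reduce this set-wise statement to the element-wise version just proved. First I would pick a witness $m^* \in M$ with $c_{eq}(m^*) = c_{eq}(M)$; such an element exists because $c_{eq}(M)$ is defined as a maximum of $c_{eq}(m)$ over $m \in M$. The first hypothesis $c_{eq}(M) > c_{adj}(M)$ together with the bound $c_{adj}(m^*) \leq c_{adj}(M)$ gives $c_{eq}(m^*) > c_{adj}(m^*)$, so the first bullet of the preceding theorem holds at $m^*$; the second bullet is hypothesized for every $m \in M$ and so holds at $m^*$. Applying the preceding theorem yields $c_{mon}(m^*) > c(m^*)$, and inspection of its proof gives the sharper equality $c_{mon}(m^*) = c_{eq}(m^*) = c_{eq}(M)$.

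Next I would pin down $c_{mon}(M)$. The remark following Theorem 1 gives $c_{mon}(M) = \max\{c_{eq}(M), c_{adj}(M)\}$, which collapses under the first hypothesis to $c_{mon}(M) = c_{eq}(M)$. The corollary therefore reduces to the strict inequality $c(M) < c_{eq}(M)$.

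Establishing this last inequality uniformly across $M$ is the main obstacle: the hypothesis $c_{eq}(M) > c_{adj}(M)$ is only a statement about maxima and may fail element-wise, so the preceding theorem cannot be invoked directly at every $m$. I would handle this with a case split on an arbitrary $m \in M$. If $c_{eq}(m) > c_{adj}(m)$, then both bullets of the preceding theorem hold at $m$ and its conclusion gives $c(m) < c_{eq}(m) \leq c_{eq}(M)$. If instead $c_{eq}(m) \leq c_{adj}(m)$, then combining the trivial bound $c(m) \leq c_{mon}(m)$ with $c_{mon}(m) = \max\{c_{eq}(m), c_{adj}(m)\} = c_{adj}(m)$ yields
\[ c(m) \leq c_{adj}(m) \leq c_{adj}(M) < c_{eq}(M), \]
where the final strict step uses the first hypothesis. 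In either case $c(m) < c_{eq}(M)$, and taking the maximum over $m \in M$ produces $c(M) < c_{eq}(M) = c_{mon}(M)$, as required.
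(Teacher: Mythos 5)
Your proposal is correct and follows essentially the same route as the paper's proof: both establish $c_{mon}(M)=c_{eq}(M)$ from the first hypothesis and then split on whether $c_{eq}(m)>c_{adj}(m)$ for each $m$, invoking the preceding theorem in the first case and the bound $c(m)\leq c_{mon}(m)=c_{adj}(m)\leq c_{adj}(M)<c_{eq}(M)$ in the second. Your opening paragraph about the witness $m^*$ is harmless but unnecessary, since the element-wise case split already covers everything.
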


\begin{proof}
    Since $c_{eq}(M)>c_{adj}(M)$, $c_{mon}(M)=c_{eq}(M)$. If an element $m \in M$ has $c_{eq}(m) \leq c_{adj}(m)$, then $c_{mon}(m)=c_{adj}(m)<c_{eq}(M)$ so $c(m)<c_{mon}(M)$. If $c_{eq}(m) > c_{adj}(m)$, then by the previous theorem, $c_{mon}(M) \geq c_{mon}(m)>c(m)$. So for all elements $m \in M$, $c(m)<c_{mon}(M)$. We conclude that $c_{mon}(M)>c(M)$. 
\end{proof}

Through our research, it has become evident that equivalent catenary degree plays a crucial role in determining monotone catenary degree. As demonstrated in this section, unless we have an element with multiple factorizations but only one factorization length, we can construct catenary graphs without ever connecting factorizations of the same length. Moreover, regular catenary degree almost never relies on equivalent catenary degree, whereas monotone catenary degree must be at least $c_{eq}(M)$. So when the equivalent catenary degree is small, we expect that $c(M)=c_{mon}(M)$. Likewise, when we have a large value for $c_{eq}(M)$, we expect $c(M)<c_{mon}(M)$. The following section further illustrates the importance of equivalent catenary degree; we see that equivalent catenary degree is quite easily understood in embedding dimension three. This allows us to find cases where the difference between monotone and regular catenary degrees grows arbitrarily large. 
\section{Embedding Dimension Three}

As seen previously, $c_{mon}(M)=\max\{c_{eq}(M),c_{adj}(M)\}$. By characterizing $c_{eq}(M)$, we can gain a deeper order understanding of the monotone catenary degree of these monoids. In many cases, the monotone catenary degree relies entirely on equivalent catenary degree, while $c_{eq}$ has no impact on regular catenary degree. In these cases, we can use generators that will yield large equivalent catenary degrees, thus producing large differences between regular and monotone catenary degrees. Since equivalent catenary degree behaves most nicely in embedding dimension three, we will focus entirely on embedding dimension three for the remainder of this section. Consider a monoid $M$ in embedding dimension three, $M=\langle n_1, n_2, n_3 \rangle$, with $n_1<n_2<n_3$.

\begin{lemma}
    In embedding dimension three, the only way to move between a factorization $(b,c,d)$ and a factorization of lower length is $\left(b-\frac{ln_1-k(n_3-n_1)}{n_2-n_1}-k-l,c+\frac{ln_1-k(n_3-n_1)}{n_2-n_1},d+k\right)$, where $l$ is the difference in length and $k$ is any integer. 
\end{lemma}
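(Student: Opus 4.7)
The plan is a direct parameterization argument. Suppose $(b',c',d')\in\mathcal{Z}(m)$ has length $l$ less than $(b,c,d)$, so that
$$bn_1+cn_2+dn_3 = b'n_1+c'n_2+d'n_3 \quad\text{and}\quad b+c+d-(b'+c'+d')=l.$$
I would introduce parameters for the changes in two of the three coordinates and show that the third is then forced by these two equations, yielding exactly the claimed form.

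Concretely, first set $k := d'-d \in \mathbb{Z}$ and $j := c'-c \in \mathbb{Z}$. The length equation then gives $b' = b - j - k - l$, so every factorization of lower length is determined by the pair $(j,k)$ once $l$ is fixed. Substituting back into the element equation
$$(b-b')n_1 + (c-c')n_2 + (d-d')n_3 = 0$$
produces $(j+k+l)n_1 - jn_2 - kn_3 = 0$, which rearranges to
$$j(n_2-n_1) = ln_1 - k(n_3-n_1).$$
Since $n_2-n_1 \neq 0$, solving for $j$ gives $j = \tfrac{ln_1 - k(n_3-n_1)}{n_2-n_1}$. Substituting this expression for $j$ into $c' = c+j$ and $b' = b-j-k-l$ yields the three coordinates stated in the lemma, with $k$ serving as the single free integer parameter.

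I expect no real obstacle here: the argument is essentially linear algebra, exploiting that in embedding dimension three the kernel of $(x,y,z)\mapsto xn_1+yn_2+zn_3$ together with the length functional is rank one, so once $l$ and $k$ are chosen the factorization is uniquely determined. The only subtlety worth mentioning is that, \emph{a priori}, the expression $\tfrac{ln_1-k(n_3-n_1)}{n_2-n_1}$ must be a non-negative integer and the other coordinates must be non-negative in order for $(b',c',d')$ to be a bona fide factorization; this is automatic from the assumption that $(b',c',d') \in \mathcal{Z}(m)$, so the lemma only asserts the algebraic form of the move, restricting $k$ to those integers for which all three coordinates are non-negative.
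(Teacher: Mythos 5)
Your proof is correct and follows essentially the same route as the paper: both parameterize the move by $k = d'-d$ and the length difference $l$, then use the two linear constraints (same element, prescribed length drop) to solve for the changes in the other two coordinates. Your closing remark that the fractional expression must be a non-negative-integer-producing choice of $k$ for the move to land on an actual factorization is a worthwhile clarification that the paper leaves implicit.
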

\begin{proof}
    Consider a factorization $(b,c,d)$ of some element $m \in M$. Now consider another factorization $(b_1,c_1,d_1)$ of $m$ of lower length, say $|(b,c,d)|=|(b_1,c_1,d_1)|+l$. Then $b+c+d=b_1+c_1+d_1+l$. Since these factorizations both factor the same element, we have
    \begin{align*}
        bn_1+cn_2+dn_3 &= b_1n_1+c_1n_2+d_1n_3 \\
        (b+c+d)n_1+c(n_2-n_1)+d(n_3-n_1) &= (b_1+c_1+d_1+l)n_1+c_1(n_2-n_1)+d_1(n_3-n_1)-ln_1 \\
        c(n_2-n_1)+d(n_3-n_1) &= c_1(n_2-n_1)+d_1(n_3-n_1)-ln_1 \\
    \end{align*}
    Suppose $d_1=d+k$ for some integer value $k$. Then 
    \begin{align*}
        c(n_2-n_1)+d(n_3-n_1) &= c_1(n_2-n_1)+(d+k)(n_3-n_1)-ln_! \\
        c(n_2-n_1) &= c_1(n_2-n_1)+k(n_3-n_1)-ln_1 \\
        c_1(n_2-n_1) &= c(n_2-n_1)-k(n_3-n_1)+ln_1 \\
        c_1 &= c + \left(\frac{ln_1-k(n_3-n_1)}{n_2-n_1}\right) \\
        b_1 &= b -  \left(\frac{ln_1-k(n_3-n_1)}{n_2-n_1}\right) -k-l \\ 
    \end{align*}
    Then $(b_1,c_1,d_1)= \left(b-\frac{ln_1-k(n_3-n_1)}{n_2-n_1}-k-l,c+\frac{ln_1-k(n_3-n_1)}{n_2-n_1},d+k\right)$. 
\end{proof}

\begin{corollary}
    In embedding dimension three, the only way to move between a factorization $(b,c,d)$ and a factorization of equal length is $\left(b + \frac{k(n_3-n_1)}{n_2-n_1} -k, c - \frac{k(n_3-n_1)}{n_2-n_1}, d+k\right)$.
\end{corollary}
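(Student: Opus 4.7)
The plan is to observe that this corollary is the specialization of the preceding lemma to the case $l = 0$, where $l$ is defined in the lemma as the difference in length between the two factorizations. Since here we require the two factorizations to have the same length, we set $l = 0$ in the formula from Lemma~2, and no further work beyond substitution should be needed.

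Concretely, first I would restate the conclusion of Lemma~2: the only factorization of $m$ of length $|(b,c,d)| - l$ is
$$\left(b - \frac{l n_1 - k(n_3 - n_1)}{n_2 - n_1} - k - l,\ c + \frac{l n_1 - k(n_3 - n_1)}{n_2 - n_1},\ d + k\right)$$
for some integer $k$. Next I would argue that two factorizations of the same element have equal length if and only if $l = 0$ in this expression, and substitute $l = 0$. The first coordinate becomes $b + \frac{k(n_3 - n_1)}{n_2 - n_1} - k$, the second becomes $c - \frac{k(n_3 - n_1)}{n_2 - n_1}$, and the third remains $d + k$, exactly matching the claim.

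The only subtlety is a logical one: Lemma~2 is phrased as characterizing factorizations of \emph{lower} length, but its derivation never actually used the inequality $l > 0$ — the algebra that produces the formula is valid for any integer $l$, including $l = 0$. So I would briefly remark that the same argument, applied with $l = 0$, gives the desired parametrization of all equal-length factorizations. There is no real obstacle here; this is purely a corollary in the bookkeeping sense, and the proof consists of a single substitution together with a short justification that the parametrization remains exhaustive when $l = 0$.
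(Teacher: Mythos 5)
Your proposal is correct and is essentially identical to the paper's own proof: both simply specialize the formula of the preceding lemma by setting $l=0$ and reading off the resulting coordinates. Your added remark that the lemma's derivation never actually uses $l>0$ is a worthwhile clarification that the paper leaves implicit.
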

\begin{proof}
    From the previous lemma, we have that the only way to move from $(b,c,d)$ to another factorization is $\left(b-\frac{ln_1-k(n_3-n_1)}{n_2-n_1}-k-l,c+\frac{ln_1-k(n_3-n_1)}{n_2-n_1},d+k\right)$. Here, $l$ is the difference in length between factorizations. Notice that when $l=0$, we have factorizations of the same length and we can write our new factorization as $\left(b + \frac{k(n_3-n_1)}{n_2-n_1} -k, c - \frac{k(n_3-n_1)}{n_2-n_1}, d+k\right)$. Then this is the only one way to move between factorizations of the same length. 
\end{proof}
\noindent The following example serves to illustrate this result. \newline
\begin{example}
    Let $M=\langle 4,9,19 \rangle$. Consider the factorizations of $105$ in $M$: 
    \begin{itemize}
        \item (24,1,0) has length 25
        \item (15,5,0) and (17,2,1) have length 20
        \item (6,9,0), (8,6,1), (10,3,2), and (12,0,3) have length 15
        \item (1,7,2), (3,4,3), and (5,1,4) have length 10
    \end{itemize}
    If we let $k=1$, we have $(b_1,c_1,d_1)=(b+\frac{15}{5}-1,c-\frac{15}{5},d+1)=(b+2,c-3,d+1)$. Notice that all factorizations of a given length can be obtained by applying this move.
\end{example}
\begin{lemma}
    In embedding dimension three, $c_{eq}(M) =   \frac{(n_3-n_1)}{gcd(n_3-n_1,n_2-n_1)}$.
\end{lemma}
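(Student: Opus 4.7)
The plan is to read off the answer directly from the length\--preserving transformation in the preceding corollary, together with a divisibility observation and a careful check of which intermediate factorizations are valid.

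First I would analyze the transformation $(b,c,d)\mapsto\bigl(b+\tfrac{k(n_3-n_1)}{n_2-n_1}-k,\; c-\tfrac{k(n_3-n_1)}{n_2-n_1},\; d+k\bigr)$ to find the smallest positive integer step. For the image to have integer coordinates we need $(n_2-n_1)\mid k(n_3-n_1)$, and writing $g=\gcd(n_3-n_1,n_2-n_1)$ the minimal such positive $k$ is $k_{\min}=\tfrac{n_2-n_1}{g}$. Substituting this, the new $b$ exceeds the old $b$ by $\tfrac{n_3-n_2}{g}$, the new $d$ exceeds the old $d$ by $\tfrac{n_2-n_1}{g}$, and the new $c$ is smaller than the old $c$ by $\tfrac{n_3-n_1}{g}$. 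Since the two factorizations have the same total length and differ only in these coordinates, the gcd\--of\--factorizations takes the smaller value in each coordinate, giving $|z|-|\gcd(z,z')|=\tfrac{n_3-n_1}{g}$. This is the distance produced by a single minimal step.

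Next, for the upper bound $c_{eq}(M)\leq \tfrac{n_3-n_1}{g}$, I would take any $m\in M$ and any two equal\--length factorizations $z,z'$ of $m$. By the corollary, $z'$ arises from $z$ by the transformation for some integer $k=j k_{\min}$, with $j\in\mathbb Z$. I would interpolate between them by stepping through the factorizations indexed by $0,k_{\min},2k_{\min},\ldots,jk_{\min}$ (or the reverse if $j<0$). Each consecutive pair has distance $\tfrac{n_3-n_1}{g}$ by the calculation above, so this yields an $N$\--chain with $N=\tfrac{n_3-n_1}{g}$; the key point to check is that the intermediate tuples are honest factorizations, i.e.\ have non\--negative coordinates. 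But along the chain $b$ and $d$ change monotonically in the same direction and $c$ changes monotonically in the opposite direction, so the intermediate coordinates lie between the corresponding coordinates of $z$ and $z'$, and are therefore non\--negative.

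Finally, for the lower bound, I would exhibit an element with two equal\--length factorizations realising distance exactly $\tfrac{n_3-n_1}{g}$. The natural candidate is $m=\tfrac{n_3-n_1}{g}\,n_2$, which has the factorization $z=\bigl(0,\tfrac{n_3-n_1}{g},0\bigr)$; applying the transformation with $k=k_{\min}$ produces the factorization $z'=\bigl(\tfrac{n_3-n_2}{g},0,\tfrac{n_2-n_1}{g}\bigr)$ of the same length and of $m$. These are distinct, so any $N$\--chain joining them must take at least one step, and by the minimality argument above every such step has distance at least $\tfrac{n_3-n_1}{g}$; hence $c_{eq}(m)\geq \tfrac{n_3-n_1}{g}$, and thus $c_{eq}(M)\geq\tfrac{n_3-n_1}{g}$.

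The routine part is the arithmetic of the transformation and the divisibility step; the only genuinely subtle point is verifying that the intermediate factorizations in the upper\--bound chain are non\--negative, which is handled by the monotonicity observation above.
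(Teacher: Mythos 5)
Your proposal is correct and follows essentially the same route as the paper: identify the unique length\--preserving move, find the minimal integer step $k_{\min}=\tfrac{n_2-n_1}{\gcd(n_3-n_1,n_2-n_1)}$, and compute that a single such step has distance $\tfrac{n_3-n_1}{\gcd(n_3-n_1,n_2-n_1)}$. Your version is in fact more complete than the paper's, which omits both the non\--negativity check on intermediate factorizations and the explicit witness element $m=\tfrac{n_3-n_1}{g}\,n_2$ establishing the lower bound.
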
 
\begin{proof}
    We know that our only means of moving around factorizations of length $|(b,c,d)|$ is $\left(b + \frac{k(n_3-n_1)}{n_2-n_1} -k, c - \frac{k(n_3-n_1)}{n_2-n_1}, d+k\right)$. Consider the string of factorizations
    \begin{enumerate}
        \item[(1)] $(b,c,d)$
        \item[(2)] $\left(b + \frac{k(n_3-n_1)}{n_2-n_1} -k, c - \frac{k(n_3-n_1)}{n_2-n_1}, d+k\right)$
        \item[(3)] $\left(b + \frac{2k(n_3-n_1)}{n_2-n_1} -2k, c - \frac{2k(n_3-n_1)}{n_2-n_1}, d+2k\right)$ \newline
            \vdots \; \vdots \; \vdots \newline
        \item [(x)] $\left(b + \frac{xk(n_3-n_1)}{n_2-n_1} -xk, c - \frac{xk(n_3-n_1)}{n_2-n_1}, d+xk\right)$
    \end{enumerate}
    Note that these will only be factorizations when $\frac{k(n_3-n_1)}{n_2-n_1} \in \mathbb{Z}$. If we solve for the smallest $k$ for which $\frac{k(n_3-n_1)}{n_2-n_1}$ is an integer, we obtain $k= \frac{n_2-n_1}{\gcd(n_3-n_1,n_2-n_1)}$. Notice that this value of $k$ will allow us to step between all factorizations by $$\left(b + \tfrac{(n_3-n_1)}{gcd(n_3-n_1,n_2-n_1)} -\tfrac{n_2-n_1}{gcd(n_3-n_1,n_2-n_1)}, c - \tfrac{(n_3-n_1)}{gcd(n_3-n_1,n_2-n_1)}, d+\tfrac{n_2-n_1}{gcd(n_3-n_1,n_2-n_1)}\right).$$ The $\gcd$ of these factorizations is $\left(b, c -  \frac{(n_3-n_1)}{gcd(n_3-n_1,n_2-n_1)}, d\right)$ and the distance between factorizations is $ \frac{(n_3-n_1)}{gcd(n_3-n_1,n_2-n_1)}$. Hence, the smallest $N$ such that any two factorizations of the same length are connected by $N$-chains is  $ \frac{(n_3-n_1)}{gcd(n_3-n_1,n_2-n_1)}$, and we conclude that \newline $c_{eq}(M) =   \frac{(n_3-n_1)}{gcd(n_3-n_1,n_2-n_1)}$.
\end{proof}

\begin{theorem}
    In embedding dimension three, equivalent catenary degree is eventually constant with dissonance number $n_2\left(\frac{(n_3-n_1)}{gcd\left(n_3-n_1,n_2-n_1\right)}\right)+\mathcal{F}(M)$. Moreover, we can determine the equivalent catenary degrees of all elements $s$ of a monoid $M$:
    \begin{itemize}
        \item If $m = n_2\left(\frac{(n_3-n_1)}{gcd\left(n_3-n_1,n_2-n_1\right)}\right)+x$ where $x \in M$, then $c_{eq}(m)=c_{eq}(M)$. 
        \item If $m = n_2\left(\frac{(n_3-n_1)}{gcd\left(n_3-n_1,n_2-n_1\right)}\right)+x$ where $x \notin M$, then $c_{eq}(m)=0$. Note that in this case $x$ can be negative. 
    \end{itemize}
\end{theorem}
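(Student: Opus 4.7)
Abbreviate $g := \gcd(n_3 - n_1, n_2 - n_1)$, $q := (n_3 - n_1)/g$, and $p := (n_2 - n_1)/g$, so that $q > p > 0$ and the previous lemma gives $c_{eq}(M) = q$. My plan is to prove the unified biconditional
$$m \text{ has two distinct factorizations of equal length} \iff m - qn_2 \in M,$$
together with the assertion that $c_{eq}(m) = c_{eq}(M)$ whenever this holds. Both bullet points and the ``eventually constant'' claim then follow directly.

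For the forward direction of the biconditional, I would invoke Corollary 2: any two distinct factorizations of $m$ of the same length are related by iterating the basic move $(b,c,d) \mapsto (b + q - p,\, c - q,\, d + p)$. Picking the one with the larger second coordinate forces $c \geq q$, and substituting yields $m - qn_2 = bn_1 + (c - q)n_2 + dn_3 \in M$. Conversely, from a factorization $(x_1, x_2, x_3)$ of $m - qn_2 \in M$ I would exhibit the two tuples $(x_1,\, x_2 + q,\, x_3)$ and $(x_1 + q - p,\, x_2,\, x_3 + p)$; both are valid factorizations of $m$ of the same length (using $q > p > 0$) and are distinct since $q > 0$.

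To compute $c_{eq}(m)$: if no two same-length factorizations exist, $c_{eq}(m) = 0$ vacuously. Otherwise the factorizations of $m$ of any fixed length are indexed by a contiguous interval of integer parameters $k$ via Corollary 2 (the non-negativity constraints on the three coordinates are linear in $k$), and consecutive indices differ by a single basic move of distance exactly $q$. Any two same-length factorizations are therefore joined by a chain of unit $k$-steps of distance $q$, giving $c_{eq}(m) \leq q$; since each individual step already has distance $q$, equality holds, so $c_{eq}(m) = c_{eq}(M)$. For the dissonance statement: if $m > n_2 q + \mathcal{F}(M)$ then $m - n_2 q > \mathcal{F}(M)$, which forces $m - n_2 q \in M$ by the definition of the Frobenius number, and hence $c_{eq}(m) = c_{eq}(M)$.

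The main technical point to check carefully is the chain argument — that stepping one unit in $k$ throughout the admissible interval produces only genuine non-negative factorizations, so no jumps over invalid intermediate configurations are needed. This should be immediate from linearity of the non-negativity constraints in $k$, but I would state it explicitly to avoid any ambiguity. Everything else is bookkeeping built on top of Corollary 2 and the $c_{eq}(M) = q$ formula from the preceding lemma.
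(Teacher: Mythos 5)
Your proposal is correct and follows essentially the same route as the paper: both arguments reduce the question to whether some factorization of $m$ has at least $q = (n_3-n_1)/\gcd(n_3-n_1,n_2-n_1)$ copies of $n_2$ (equivalently, whether $m - qn_2 \in M$), use the length-preserving move from Corollary 2 to show $c_{eq}(m) = q$ whenever two same-length factorizations exist, and deduce the dissonance bound from the Frobenius number. Your version is somewhat tidier in stating the biconditional explicitly and in checking non-negativity of the intermediate factorizations in the chain, but these are refinements of the paper's argument rather than a different approach.
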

\begin{proof}
    For any monoid $M$ in embedding dimension three, we have that $c_{eq}(M) =  \frac{(n_3-n_1)}{gcd(n_3-n_1,n_2-n_1)}$. We also know that there is only one way to step between factorizations of the same length, which yields a difference of $\frac{(n_3-n_1)}{gcd(n_3-n_1,n_2-n_1)}$. So if an element $s$ has two factorizations of the same length, $c_{eq}(s)= \frac{(n_3-n_1)}{gcd(n_3-n_1,n_2-n_1)} = c_{eq}(M)$. If we can guarantee that an element $m$ has at least $\frac{(n_3-n_1)}{gcd(n_3-n_1,n_2-n_1)}$ copies of $n_2$, we guarantee that $m$ has $c_{eq}(m) =  \frac{(n_3-n_1)}{gcd(n_3-n_1,n_2-n_1)}$. Consider the element $m = n_2\left(\frac{(n_3-n_1)}{gcd\left(n_3-n_1,n_2-n_1\right)}\right)+x$ where $x \in M$. Then 
    \begin{align*}
        m &= n_2\left(\frac{(n_3-n_1)}{gcd\left(n_3-n_1,n_2-n_1\right)}\right)+(b,c,d) \text{ with nonnegative integers } b,c,d \\
        &= n_2\left(\frac{(n_3-n_1)}{gcd\left(n_3-n_1,n_2-n_1\right)}\right)+bn_1+cn_2+dn_3 \\
        &= bn_1+\left(c+\frac{(n_3-n_1)}{gcd\left(n_3-n_1,n_2-n_1\right)}\right)n_2+dn_3 \\
        &= \left(b,c+\frac{(n_3-n_1)}{gcd\left(n_3-n_1,n_2-n_1\right)},d\right)
    \end{align*}
    Then $m$ has at least $\frac{(n_3-n_1)}{gcd(n_3-n_1,n_2-n_1)}$ copies of $n_2$, and $c_{eq}(m) =  \frac{(n_3-n_1)}{gcd(n_3-n_1,n_2-n_1)}$. Then for any $m \geq n_2\left(\frac{(n_3-n_1)}{gcd\left(n_3-n_1,n_2-n_1\right)}\right)+\mathcal{F}+1$, $c_{eq}(m)=c_{eq}(M)$ and we conclude that $c_{eq}(m)$ is eventually constant. Now suppose we can write $m$ as 
    \begin{align*}
        m &= n_2\left(\frac{(n_3-n_1)}{gcd\left(n_3-n_1,n_2-n_1\right)}\right)+x \text{ where } x \notin M \\
        x &= m-n_2\left(\frac{(n_3-n_1)}{gcd\left(n_3-n_1,n_2-n_1\right)}\right)\\
    \end{align*}
    Assume for the sake of contradiction that $c_{eq}(m)=c_{eq}(M)$. Then $m=(b,c,d)$ where $c \geq \frac{(n_3-n_1)}{gcd(n_3-n_1,n_2-n_1)}$.
    \begin{align*}
        x &= bn_1+cn_2+dn_3-n_2\left(\frac{(n_3-n_1)}{gcd\left(n_3-n_1,n_2-n_1\right)}\right)\\
        &= bn_1+\left(c-\frac{(n_3-n_1)}{gcd\left(n_3-n_1,n_2-n_1\right)}\right)n_2+dn_3 \\
        &= \left(b,c-\frac{(n_3-n_1)}{gcd\left(n_3-n_1,n_2-n_1\right)},d\right)
    \end{align*}
    This implies that $x \in M$, but we know that this is not the case. This contradiction implies that if $m=n_2\left(\frac{(n_3-n_1)}{gcd\left(n_3-n_1,n_2-n_1\right)}\right)+x$ with $x \notin M$, $c_{eq}(m)=0$. Moreover, the dissonance number for $c_{eq}(m)$ is $n_2\left(\frac{(n_3-n_1)}{gcd\left(n_3-n_1,n_2-n_1\right)}\right)+\mathcal{F}(M)$. 

\end{proof}
Hence, we have fully characterized $c_{eq}(M)$ for any monoid $M$ in embedding dimension three. Notice that to obtain a large value for $c_{eq}(M)$, we require $n_3-n_1$ and $n_2-n_1$ to be relatively prime so that $c_{eq}(M)=n_3-n_1$. This value for equivalent catenary degree often yields a monotone catenary degree that is a great deal larger than regular catenary degree. To observe this, consider the following example.

\begin{example}
    Let $M=\langle 4,9,18 \rangle$ and let $M'=\langle 4,9,19 \rangle$. Then $c(M)=9$ and $c(M')=7$. For $M$, $\gcd(18-4,9-4)=\gcd(14,5)=1$ so $c_{eq}(M)=18-4=14$. In $M'$, $\gcd(19-4,9-4)=\gcd(15,5)=5$ so $c_{eq}(M')=\frac{19-4}{5}=3$. $c_{mon}(M)=c_{eq}(M)=14$, and $c_{mon}(M')=c_{adj}(M')=7$. Hence, $c_{mon}(M)>c(M)$, but $c_{mon}(M')=c(M)$. 
\end{example}

\section{The Difference Between Monotone and Regular Catenary Degree}

We now seek to discover classes of monoids that meet our conditions for $c_{mon}(M)>c(M)$. The following class of monoids not only meets these conditions, but can produce arbitrarily large differences between monotone and regular catenary degrees. 
Consider a monoid $M$ in embedding dimension three, $M=\langle na,na+n,2na+nx+1 \rangle$ with $x \geq 2$. 

\begin{lemma}
    Let $M=\langle na,na+n,2na+nx+1 \rangle$ with $x \geq 2$. Then $c_{eq}(M)=na+nx+1$.
\end{lemma}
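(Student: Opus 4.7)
The approach is a direct application of the preceding Lemma, which established that in embedding dimension three, $c_{eq}(M) = \frac{n_3 - n_1}{\gcd(n_3 - n_1, n_2 - n_1)}$. The entire proof reduces to computing this fraction for the specific generators $n_1 = na$, $n_2 = na + n$, $n_3 = 2na + nx + 1$, and the only real content is verifying that the greatest common divisor in the denominator equals $1$.

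First, I would compute the two differences appearing in the formula: $n_3 - n_1 = 2na + nx + 1 - na = na + nx + 1$, and $n_2 - n_1 = n$. Substituting these gives
$$c_{eq}(M) = \frac{na + nx + 1}{\gcd(na + nx + 1,\, n)}.$$

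Next I would show that the denominator is $1$. Observe that $na + nx + 1 = n(a + x) + 1$, so $na + nx + 1 \equiv 1 \pmod{n}$. Therefore $\gcd(na + nx + 1, n) = \gcd(1, n) = 1$. Plugging this into the expression above yields $c_{eq}(M) = na + nx + 1$, as claimed.

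The proof is essentially a one-line computation, so there is no serious obstacle; the only thing to double-check is that the presentation $\langle na, na+n, 2na+nx+1 \rangle$ is really in embedding dimension three (so that the previous Lemma applies), which follows from the strict inequalities $na < na + n < 2na + nx + 1$ (using $x \geq 2$) together with the observation that $2na + nx + 1$ cannot be written as a nonnegative integer combination of $na$ and $na + n$ because any such combination is divisible by $n$ while $2na + nx + 1$ is not. This minimality check is the only place where the hypothesis $x \geq 2$ is needed, and it is routine.
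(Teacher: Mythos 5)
Your proposal is correct and follows essentially the same route as the paper: both apply the embedding-dimension-three formula $c_{eq}(M) = \frac{n_3-n_1}{\gcd(n_3-n_1,\,n_2-n_1)}$ and observe that $\gcd(n(a+x)+1,\,n)=1$. Your added check that the generating set is minimal is a reasonable (and slightly more careful) touch the paper omits, but it does not change the argument.
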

\begin{proof}
    We know that $c_{eq}(M)= \frac{(n_3-n_1)}{gcd(n_3-n_1,n_2-n_1)} =  \frac{(na+nx+1)}{gcd(na+nx+1,n)}$. Since $n(a+x)+1$ and $n$ are relatively prime, $c_{eq}(M)=\frac{na+nx+1}{1}=na+nx+1$. 
\end{proof} $$ $$

\begin{lemma}
    Let $M=\langle na,na+n,2na+nx+1 \rangle$ with $x \geq 2$. Then $c_{adj}(M)<na+nx+1$.
\end{lemma}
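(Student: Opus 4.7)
The plan is to establish the strict inequality by exhibiting, for every element $m \in M$ and every pair of adjacent lengths $l_1 < l_2$ in $\mathcal{L}(m)$, two factorizations $z \in \mathcal{Z}_{l_1}(m)$ and $z' \in \mathcal{Z}_{l_2}(m)$ with $d(z, z') < na + nx + 1$. The main tool is the length-change formula of Lemma 5 from this section, specialized at a few informative parameter values.

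Fix such $m$, set $l = l_2 - l_1$, and note that $\gcd(n_3 - n_1, n_2 - n_1) = \gcd(na + nx + 1, n) = 1$ forces the parameter $k$ in Lemma 5 to be a multiple of $n$. Writing $k = nm'$, the move sends a starting factorization $(b,c,d) \in \mathcal{Z}_{l_2}(m)$ to $(b + m'(n(a+x-1)+1) - l(a+1),\ c + la - m'(na+nx+1),\ d + nm')$. Specializing to $m' = 0$ gives image $(b - l(a+1),\ c + la,\ d)$ at distance $l(a+1)$. Specializing to $m' = 1$ gives an image whose distance, by a coordinatewise gcd computation, is $(na+nx+1) - la$ when $l(a+1) \leq n(a+x-1)+1$ and $l + n$ otherwise.

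Next, I verify that some candidate falls strictly below $na+nx+1$ for every feasible $l$. The $m' = 0$ distance $l(a+1)$ is under the threshold whenever $l(a+1) \leq na+nx$; the small-$l$ $m'=1$ distance $(na+nx+1) - la$ is under the threshold for any $l \geq 1$; and the large-$l$ $m'=1$ distance $l + n$ is under the threshold whenever $l \leq n(a+x-1)$. Comparing the ranges, these three options cover every $l$ that can appear as an adjacent gap in $\mathcal{L}(m)$, provided that at least one corresponding move is valid at some representative of $\mathcal{Z}_{l_2}(m)$.

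The main obstacle is precisely this validity check. The $m' = 0$ move requires $b \geq l(a+1)$; the small-$l$ $m' = 1$ move requires $c \geq na+nx+1 - la$; and the large-$l$ $m' = 1$ move requires $b \geq l(a+1) - n(a+x-1) - 1$. To guarantee that one of these conditions holds, I would use the length-preserving transformation of Corollary 2, which within $\mathcal{Z}_{l_2}(m)$ translates factorizations by the vector $(n(a+x-1)+1,\ -(na+nx+1),\ n)$, allowing one to trade the sizes of $b$ and $c$ against each other. A case analysis split on whether $l$ is above or below the threshold $(na+nx+1)/(2a+1)$, combined with the pigeonhole observation that $b + c + d = l_2$ is preserved along the orbit, selects a representative and a move yielding the bound and establishes $c_{adj}(M) < na+nx+1$.
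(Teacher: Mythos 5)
Your setup is sound and closely parallels the paper's: you specialize the length-change move of Lemma 5 at $k=nm'$ (correctly observing that $\gcd(na+nx+1,n)=1$ forces $n\mid k$), and your distance computations for $m'=0$ and for $m'=1$ in the ``small $l$'' regime are right. (One small error: the ``otherwise'' distance $l+n$ for $m'=1$ only holds when the middle coordinate still decreases, i.e.\ when $la< na+nx+1\le l(a+1)$; if $la\ge na+nx+1$ the gcd changes and the distance is $l(a+1)-(n(a+x-1)+1)$, which is not $l+n$.) But the proof has a genuine gap exactly where you say the ``main obstacle'' lies: you never actually establish that, for each pair of adjacent lengths, at least one of your candidate moves is valid at some representative of $\mathcal{Z}_{l_2}(m)$. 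The final paragraph announces a case split at the threshold $(na+nx+1)/(2a+1)$ and a pigeonhole argument along the Corollary~2 orbit, but does not carry either out, and it is not clear it would succeed as stated: the orbit of the length-preserving move may be very short (even a single point), so you cannot freely ``trade $b$ against $c$,'' and you have no a priori control over which gaps $l=l_2-l_1$ occur as adjacent differences. Since the entire content of the lemma is this existence claim, the argument as written does not prove the statement.

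For comparison, the paper avoids the problem by arguing by contradiction with $l=1$ only: fix $(b,c,d)$ of length $l_2$ and suppose every lower-length factorization is at distance $\ge na+nx+1$ from it. If the $(k,l)=(0,1)$ move is invalid then $b\le a$; if the $(k,l)=(n,1)$ move is invalid then $c\le (n-1)a+nx$; together $b+c\le na+nx$, so a distance $\ge na+nx+1$ would force $d_1<d$, hence $k<0$, which makes the first coordinate negative. This sidesteps both of your difficulties at once: one never needs to enumerate the possible adjacent gaps $l$ (if the $l=1$ moves succeed, the adjacent length \emph{is} $l_2-1$; if they fail, \emph{all} lower-length factorizations are automatically close), and the validity check is converted into usable inequalities on $b$ and $c$ rather than something to be engineered by moving along the equal-length orbit. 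If you want to salvage your direct approach, you should restructure it along these lines: treat failure of a move as information about the coordinates rather than as an obstacle to be overcome.
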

\begin{proof}
    Remember that any factorization $(b_1,c_1,d_1)$ can be written as $(b-\frac{ln_1-k(n_3-n_1)}{n_2-n_1}-k-l,c+\frac{ln_1-k(n_3-n_1)}{n_2-n_1},d+k)=(b-\frac{lna-k(na+nx+1)}{n}-k-l,c+\frac{lna-k(na+nx+1)}{n},d+k)$ in terms of some other factorization $(b,c,d)$. So
    assume for the sake of contradiction that the shortest distance between $z_1=(b,c,d)$ and an adjacent factorization of lower length is at least $na+nx+1$. Notice that when $k=0$ and $l=1$, $z_i=(b-a-1,c+a,d)$ and so our difference is $a+1<na+nx+1$. Then suppose $z_i$ does not exist, so we force $b \leq a$. Similarly, when $k=n$ and $l=1$ we have $z_j=(b+na-a+nx-n,c-na+a-nx-1,d+n)$ which yields a difference of $(n-1)a+nx+1<na+nx+1$, so suppose $z_j$ does not exist. This forces $c \leq (n-1)a+nx$. Since $b \leq a$ and $c \leq (n-1)a+nx$, $b+c \leq na+nx$, so we need $d>d_1$ in order to obtain a difference of at least $na+nx+1$. To have $d>d_1$, we need a negative value for $k$. However, when $k$ is negative, $b_k=b-la+ka+kx+\frac{k}{n}-k-l<b-a<0$, so no such factorizations will exist. Hence, our adjacent catenary degree must be smaller than $na+nx+1$ and we conclude $c_{eq}(M)>c_{adj}(M)$. 
\end{proof}

\begin{theorem}
    Let $M=\langle na,na+n,2na+nx+1 \rangle$ with $x \geq 2$. Then $c_{mon}(M)=na+nx+1$
\end{theorem}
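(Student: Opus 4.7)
The plan is to derive this theorem as an essentially immediate consequence of the two lemmas that precede it, combined with the set\--wise version of Theorem 1. Recall that Theorem 1 establishes $c_{mon}(m) = \max\{c_{eq}(m), c_{adj}(m)\}$ element\--wise, and the remark following its proof records the set\--wise consequence $c_{mon}(M) = \max\{c_{eq}(M), c_{adj}(M)\}$. So the entire proof reduces to plugging in the two quantities that have just been computed.

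First, I would quote Lemma 9 to obtain $c_{eq}(M) = na + nx + 1$. For completeness, this depends on Lemma 8, which computes $c_{eq}$ in embedding dimension three as $(n_3 - n_1)/\gcd(n_3 - n_1, n_2 - n_1)$. Here $n_3 - n_1 = na + nx + 1$ and $n_2 - n_1 = n$, and since $na + nx + 1 \equiv 1 \pmod n$, the two differences are coprime, so the formula collapses to $na + nx + 1$.

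Next, I would invoke Lemma 10, which establishes the strict inequality $c_{adj}(M) < na + nx + 1$. Feeding both computations into the max yields $\max\{c_{eq}(M), c_{adj}(M)\} = c_{eq}(M) = na + nx + 1$, and hence $c_{mon}(M) = na + nx + 1$.

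There is no real obstacle: all the work has already been done in Lemmas 8, 9, and 10. The only point requiring a moment's care is that Theorem 1 is stated and proved element\--wise, while we need its set\--wise analogue; this is covered by the remark after Theorem 1, but if one wanted to be fully rigorous, one could also note that $c_{mon}(M) \geq c_{eq}(M)$ follows because monotone chains between equal\--length factorizations are in particular equal\--length chains, while $c_{mon}(M) \leq \max\{c_{eq}(M), c_{adj}(M)\}$ follows by concatenating equal\--length chains (bounded by $c_{eq}$) with single adjacent\--length transitions (bounded by $c_{adj}$) into a monotone chain, exactly as in Case 3 of Theorem 1's proof.
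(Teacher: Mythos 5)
Your proposal is correct and matches the paper's own proof essentially verbatim: both simply combine the preceding lemmas giving $c_{eq}(M)=na+nx+1$ and $c_{adj}(M)<na+nx+1$ with the identity $c_{mon}(M)=\max\{c_{eq}(M),c_{adj}(M)\}$ from Theorem 1 and its set\--wise remark. The extra care you take in justifying the set\--wise version is a reasonable addition but not a departure from the paper's argument.
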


\begin{proof}
    We know that $c_{mon}(M)=\max(c_{eq}(M),c_{adj}(M))$ and that $c_{eq}(M)=na+nx+1$ and $c_{adj}(M)<na+nx+1$. Then $c_{mon}(M)=\max(c_{eq}(M),c_{adj}(M))=na+nx+1$. 
\end{proof}

\begin{lemma}
    Let $M=\langle na,na+n,2na+nx+1 \rangle$ with $x \geq 2$. If an element $m \in M$  has two factorizations $z$ and $z_1$ of a given length $l$, there exists another factorization $z_2$ of length $l-n$ such that $d(z,z_2)=nx+1$ and $d(z_1,z_2)=na+n$. 
\end{lemma}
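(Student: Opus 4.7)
The plan is to give an explicit construction of $z_2$ by applying the length-changing transformation from Lemma 5, and then to verify both distances by direct gcd computations.

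First, I would use Corollary 1 to parametrize $z$ and $z_1$. All factorizations of $m$ of length $l$ lie on a single orbit of the length-preserving move $(b,c,d)\mapsto\bigl(b+\tfrac{k(na+nx+1)}{n}-k,\;c-\tfrac{k(na+nx+1)}{n},\;d+k\bigr)$, and since $\gcd(na+nx+1,n)=1$ the smallest admissible $k$ is $k=n$. Writing $z=(b,c,d)$ and taking $z_1$ to be the next factorization along this orbit, we have $z_1=(b+na+nx+1-n,\,c-na-nx-1,\,d+n)$; nonnegativity of $z_1$ forces $c\geq na+nx+1$.

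Next, I would define $z_2$ by applying Lemma 5 with length difference $L=n$ and parameter $k=n$, which gives
$$z_2=\bigl(b+nx+1-2n,\;c-nx-1,\;d+n\bigr).$$
The hypothesis $x\geq 2$ yields $nx+1-2n\geq 1>0$, so the first coordinate is nonnegative; the second coordinate is nonnegative because $c\geq na+nx+1\geq nx+1$; the third is immediate. Thus $z_2\in\mathcal{Z}(m)$ with $|z_2|=l-n$.

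The distance computations are then straightforward. For $d(z,z_2)$: the first coordinate of $z_2$ exceeds that of $z$, the second of $z$ exceeds that of $z_2$ by $nx+1$, and the third of $z_2$ exceeds that of $z$; hence $\gcd(z,z_2)=(b,\,c-nx-1,\,d)$ has size $l-nx-1$, giving $d(z,z_2)=l-(l-nx-1)=nx+1$. For $d(z_1,z_2)$: coordinatewise comparison shows $\gcd(z_1,z_2)=(b+nx+1-2n,\,c-na-nx-1,\,d+n)$ of size $l-na-n$, giving $d(z_1,z_2)=na+n$. The main obstacle worth flagging is the implicit assumption that $z_1$ is a \emph{single} minimal step from $z$ along the length-preserving orbit; if $z_1$ were obtained by $s\geq 2$ such steps, the two equalities could not both hold for a single $z_2$, so for the eventual application to Theorem 6 one would invoke this construction on each adjacent pair and chain the resulting stepping stones together.
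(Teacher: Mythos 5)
Your construction is essentially the paper's: apply the length-changing move of Lemma 5 with length difference $n$ and $k=n$ to produce $z_2=(b+nx+1-2n,\,c-nx-1,\,d+n)$, then verify both distances by direct gcd computation. In fact your formulas are the correct ones for general $n$ (the paper's printed intermediate steps, e.g.\ $d_1=d+1$ and $b_1=b+na+nx$, only check out when $n=1$ and appear to be typos), and the caveat you rightly flag --- that $z$ and $z_1$ must be one minimal length-preserving step apart for both stated distances to hold exactly --- is an implicit assumption the paper's proof makes as well.
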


\begin{proof}
    Suppose $m$ has two factorizations $z=(b,c,d)$ and $z_1=(b_1,c_1,d_1)$ of length $l$. We will prove that $m$ has a third factorization $z_2=(b_2,c_2,d_2)$ of length $l-1$. If $|z|=|z_1|=l$, then $(b_1,c_1,d_1)=(b+na+nx,c-na-nx-1,d+1)$. Let $(b_2,c_2,d_2)=(b_1-na-n,c_1+na,d_1)=(b+nx-n,c-nx-1,d+1)$. Then $nx-n>0$ so $b_2>0$, and since $c_1=c-na-nx-1 \geq 0$, $c_2=c-nx-1>0$. Clearly $d_2=d+1>0$. Then $(b_2,c_2,d_2)$ is another factorization of $m$ with length $l-n$. Note that $gcd(z,z_2)=(b,c-nx-1,d)$ and $d(z,z_2)=nx+1$. Also, $gcd(z_1,z_2)=(b_1-na-n,c_1,d_1)$ and $d(z_1,z_2)=na+n$. 
\end{proof}

\begin{theorem}
    $c_{mon}(M)>c(M)$
\end{theorem}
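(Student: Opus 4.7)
The plan is to apply Corollary~1 from Section~5, which asserts that $c_{mon}(M) > c(M)$ whenever two hypotheses hold: first, $c_{eq}(M) > c_{adj}(M)$; and second, every element $m \in M$ with two factorizations of the same length $l$ admits a third factorization of a different length whose distances to the original two are both strictly less than $c_{eq}(m)$. The string of lemmas immediately preceding this theorem was arranged precisely to verify these hypotheses for the family $M = \langle na, na+n, 2na+nx+1 \rangle$, so the argument should reduce to a short bookkeeping check.

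First I would note that the first hypothesis of the corollary is immediate: Lemma~2 of this section computes $c_{eq}(M) = na + nx + 1$, while Lemma~3 establishes $c_{adj}(M) < na + nx + 1$, so $c_{eq}(M) > c_{adj}(M)$ as required.

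Next I would verify the bridging hypothesis. Let $m \in M$ possess two distinct factorizations $z$ and $z_1$ of the same length $l$. The preceding lemma produces a factorization $z_2$ of length $l - n$ satisfying $d(z, z_2) = nx + 1$ and $d(z_1, z_2) = na + n$. The condition of the corollary demands that each of these distances be strictly less than $c_{eq}(m)$; the one subtle point is that $c_{eq}(m)$, not $c_{eq}(M)$, appears here. However, by the eventually-constant theorem for equivalent catenary degree in embedding dimension three, $c_{eq}(m) \in \{0, c_{eq}(M)\}$, and since $m$ has at least two factorizations of equal length we cannot have $c_{eq}(m) = 0$. Thus $c_{eq}(m) = c_{eq}(M) = na + nx + 1$. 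Now $nx + 1 < na + nx + 1$ since $na > 0$, and $na + n < na + nx + 1$ since $x \geq 2$ forces $n \leq nx < nx + 1$. Both bridging distances therefore lie strictly below $c_{eq}(m)$.

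Both hypotheses of Corollary~1 being verified, we conclude $c_{mon}(M) > c(M)$. The main obstacle, modest as it is, lies entirely in the careful verification that $c_{eq}(m) = c_{eq}(M)$ holds for every relevant $m$ (a direct appeal to the eventually-constant structure theorem) and that the two numerical inequalities $nx + 1 < na + nx + 1$ and $na + n < na + nx + 1$ hold under the standing hypothesis $x \geq 2$; the heavy lifting was done in the preparatory lemmas.
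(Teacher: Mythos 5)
Your proposal is correct and follows essentially the same route as the paper: both arguments combine the lemma $c_{eq}(M)=na+nx+1$, the lemma $c_{adj}(M)<na+nx+1$, and the bridging lemma producing a factorization of length $l-n$ at distances $nx+1$ and $na+n$, to verify the two conditions for strict inequality established in the earlier section. Your explicit check that $c_{eq}(m)=c_{eq}(M)$ for every element with two equal-length factorizations (via the eventually-constant theorem) is a point the paper glosses over slightly, but it is the same argument in substance.
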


\begin{proof}
    We know that $c_{eq}(M)>c_{adj}(M)$ and that every element with at least two factorizations has at least two elements in its length set. First consider an element $x \in M$ with a single factorization. Then $c(x)=0$. Now consider an element $m$ with more than one factorizations. Then  $\mathcal{L}(s)$ has at least two elements. If $m$ has no two factorizations of the same length, then $c_{eq}(m)=0<c(s) \leq c_{adj}(m)<c_{eq}(M)$. If $m$ has at least two factorizations of a given length, then $c_{eq}(m)=na+nx+1>c_{adj}(m)$ so $c_{mon}(m)=na+nx+1$. When computing the catenary degree of this element, we can connect adjacent factorizations with $d<na+nx+1$. If two factorizations have length $l$, we can connect them to a factorization of length $l-n$ with distance $na+n$ or $nx+1$. In this way, we can construct the entire catenary graph of $s$ with distances less than $na+nx+1$. Hence, $c(m)<c_{eq}(m)$ for all $m \in M$. Then $c(M)<c_{mon}(M)$.   
\end{proof}

We now demonstrate that $c_{mon}(M)-c(M)$ can grow arbitrarily large in this class of monoids. 
Consider a monoid $M$ in embedding dimension three, $M=\langle a,a+1,\mathcal{F}\langle a,a+1 \rangle \rangle$. Here, $n=1$ and  $x=\frac{a^2}{2}-\frac{3a}{2}-1$. 

\begin{lemma}
    Let $M=\langle a,a+1,\mathcal{F}\langle a,a+1 \rangle \rangle$. Then $c_{mon}(M)=a^2-2a-1$.
\end{lemma}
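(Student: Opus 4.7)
The plan is to recognize this monoid as a specific instance of the family $\langle na, na+n, 2na+nx+1 \rangle$ analyzed in the previous theorem, and then read off $c_{mon}(M)$ directly from the formula proved there. The parameter identification is the only non-trivial step.

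First I would compute $\mathcal{F}\langle a,a+1\rangle$ explicitly. Because $a$ and $a+1$ are coprime, the classical Sylvester--Frobenius formula gives
\[
\mathcal{F}\langle a,a+1\rangle \;=\; a(a+1)-a-(a+1) \;=\; a^{2}-a-1,
\]
so $M=\langle a,\,a+1,\,a^{2}-a-1\rangle$. Since $a^{2}-a-1\notin\langle a,a+1\rangle$ by the very definition of the Frobenius number, the three listed generators are indeed a minimal generating set (and $a^{2}-a-1>a+1$ for all $a\geq 3$), so this is a genuine embedding dimension three numerical monoid.

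Next I would match $M$ against the template $\langle na,\,na+n,\,2na+nx+1\rangle$ by taking $n=1$. The first two generators match trivially, and solving $2a+x+1=a^{2}-a-1$ forces $x=a^{2}-3a-2$. To apply the previous theorem I need $x\geq 2$, i.e.\ $a^{2}-3a-4\geq 0$, which factors as $(a-4)(a+1)\geq 0$; this holds for all $a\geq 4$, which is the implicit range of the lemma.

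Finally, invoking the theorem $c_{mon}\bigl(\langle na,na+n,2na+nx+1\rangle\bigr)=na+nx+1$ with $n=1$ and $x=a^{2}-3a-2$ yields
\[
c_{mon}(M)\;=\;a+(a^{2}-3a-2)+1\;=\;a^{2}-2a-1,
\]
as claimed. There is no real obstacle here: all the analytical work (the $c_{eq}$ computation via Lemma on the length-preserving move, the bound on $c_{adj}$, and the combination via Theorem~1) has already been carried out for the general template, and this lemma is a pure specialization. The only care required is the parameter identification and the verification that the hypothesis $x\geq 2$ is satisfied.
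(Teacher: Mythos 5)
Your proof is correct and follows essentially the same route as the paper: identify $M$ as the member of the family $\langle na,na+n,2na+nx+1\rangle$ with $n=1$ and invoke the already-proved results for that family (the paper cites the $c_{eq}$ formula and the lemma $c_{adj}(M)<c_{eq}(M)$ directly, while you cite the combined theorem, which amounts to the same thing). Your parameter value $x=a^{2}-3a-2$, together with the check that $x\geq 2$ forces $a\geq 4$, is actually the consistent choice — the paper's stated $x=\frac{a^{2}}{2}-\frac{3a}{2}-1$ only yields the third generator $a^{2}-a-1$ because of a compensating slip in its substitution — so your version is, if anything, tidier.
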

\begin{proof}
    Note that $\mathcal{F}\langle a,a+1 \rangle=a(a+1)-(2a+1)=a^2+a-2a-1=a^2-a-1$. Since $gcd(a^2-2a-1,1)=1$, $c_{eq}(M)=a^2-2a-1$. We have already shown that in $M= \langle na,na+n,2na+nx+1 \rangle$, $c_{adj}<c_{eq}$. Then if we let $n=1$ and $x=\frac{a^2}{2}-\frac{3a}{2}-1$, we have $M= \langle a,a+1,2a+2(\frac{a^2}{2}-\frac{3a}{2}-1)+1 \rangle = \langle a,a+1,a^2-a-1 \rangle=\langle a,a+1,\mathcal{F}\langle a,a+1 \rangle \rangle$. So $c_{adj}(M)<c_{eq}(M)=a^2-2a-1$ and $c_{mon}(M)=\max(c_{eq}(M),c_{adj}(M))=a^2-2a-1$.
\end{proof}

\begin{lemma}
    The Betti Elements of $M$ are $a^2$, $a^2-1$, and $2\mathcal{F}\langle a,a+1 \rangle $. 
\end{lemma}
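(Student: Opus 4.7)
The plan is to verify each of the three claimed elements is Betti by computing its full factorization set, then rule out additional Betti elements via a kernel argument. Writing $n_1=a$, $n_2=a+1$, and $n_3=a^2-a-1=\mathcal{F}\langle a,a+1\rangle$, I would first establish
\begin{align*}
\mathcal{Z}(a^2) &= \{(a,0,0),\,(0,1,1)\}, \\
\mathcal{Z}(a^2-1) &= \{(0,a-1,0),\,(1,0,1)\}, \\
\mathcal{Z}(2(a^2-a-1)) &= \{(0,0,2),\,(a-1,a-2,0)\}.
\end{align*}
Completeness of each set follows by casing on the number $d$ of copies of $n_3$ used: $d\geq 3$ is immediately too large, and each of $d\in\{0,1,2\}$ reduces to a small congruence in $\langle a,a+1\rangle$. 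The decisive case is $d=1$ for $m=2(a^2-a-1)$, which would require $a^2-a-1\in\langle a,a+1\rangle$; this fails by definition of the Frobenius number. In each of the three cases the two factorizations have disjoint support, so the R-graph $\nabla(m)$ has two connected components and $m$ is Betti.

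For the converse, I would compute the kernel of the evaluation map $\phi\colon\mathbb{Z}^3\to\mathbb{Z}$ defined by $\phi(x,y,z)=xn_1+yn_2+zn_3$. Parameterizing solutions via residues modulo $a$ shows that $\ker\phi$ is freely generated as a $\mathbb{Z}$-module by the two trade vectors
$$R_1=(a,-1,-1)\quad\text{and}\quad R_2=(-1,a-1,-1),$$
corresponding to the relations at $a^2$ and $a^2-1$ respectively. The third trade $R_3=(-(a-1),-(a-2),2)$ associated to $2(a^2-a-1)$ satisfies $R_1+R_2+R_3=0$, consistent with $\ker\phi$ being of rank two.

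I would then invoke the standard fact that for any $m\in M$, the trade graph on $\mathcal{Z}(m)$---whose edges join pairs of factorizations differing by $\pm R_1$, $\pm R_2$, or $\pm R_3$ through non-negative intermediate factorizations---is connected. This trade graph is a subgraph of $\nabla(m)$ except at edges whose endpoints have disjoint support. A direct inspection of the six trade directions shows that such disjoint-support edges arise exactly at the three Betti configurations above: for instance, $+R_1$ sends $(b,c,d)\mapsto(b+a,c-1,d-1)$, and this pair has disjoint support only when $b=0$ and $c=d=1$, which forces $m=a^2$. Consequently, for any $m\notin\{a^2,a^2-1,2(a^2-a-1)\}$, the trade graph is a subgraph of $\nabla(m)$, so $\nabla(m)$ is connected and $m$ is not Betti.

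The main obstacle is the technical point that the trade graph can always be navigated using only non-negative intermediate factorizations, which underpins the connectedness statement used in the last step. This is handled by a careful ordering argument exploiting that $\{R_1,R_2\}$ generates $\ker\phi$ over $\mathbb{Z}$. Once that is in place, the six-direction case analysis of disjoint-support trades together with the factorization-set computations of the first step combine to give the lemma.
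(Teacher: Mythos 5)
Your route is genuinely different from the paper's. The paper verifies that each of $a^2=a\cdot n_1$, $a^2-1=(a-1)n_2$, and $2\mathcal{F}\langle a,a+1\rangle=2n_3$ is the \emph{smallest} multiple of the corresponding generator lying in the monoid generated by the other two (using symmetry of the two-generated semigroups and the Frobenius membership test), and then implicitly leans on the standard embedding-dimension-three fact that the Betti elements are exactly these $c_in_i$. You instead compute the full fibers at the three elements -- your three factorization sets are correct, and disjoint support then gives the forward direction cleanly -- and you attack the converse head-on via the kernel lattice and a trade-graph argument. Your converse is something the paper never actually writes down, so your plan is more self-contained; the paper's minimality computations are what your approach trades away for the fiber computations plus the presentation argument.

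The one step you should not wave at is the connectedness of the trade graph. Knowing that $\{R_1,R_2\}$ generates $\ker\phi$ as a $\mathbb{Z}$-module does \emph{not} by itself imply that the trades connect every fiber through non-negative intermediate factorizations: a lattice basis of $\ker\phi$ need not generate the defining congruence, which is exactly the gap between a lattice basis and a presentation. What saves you is that $\{R_1,R_2,R_3\}$, attached to the specific factorization pairs you exhibit, is precisely the Herzog minimal presentation of a three-generated numerical semigroup (see the Rosales--Garc\'ia-S\'anchez reference already in the bibliography), so the connectedness you need is citable -- but citing it is essentially equivalent to citing the $c_in_i$ characterization the paper relies on, and proving it from scratch by your ``ordering argument'' is the real technical content of the converse. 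Either supply that induction in full or cite the presentation theorem explicitly; as written, the justification offered for the decisive step is not sufficient. The remainder of your argument (the six-direction disjoint-support check and the fiber computations) is correct.
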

\begin{proof}
    First consider the element $a^2 \in M$. Clearly $(a,0,0)$ is a factorization of $a^2$. Note also that $(a+1)+(a^2-a-1)=a^2$ so $(0,1,1)$ is another factorization. We need to show that $a$ is the minimal coefficient $c$ such that $c \cdot a$ can be supported by $a+1$ and $\mathcal{F}\langle a,a+1 \rangle$. First we will show that $(a-1)a$ cannot be supported by $a+1$ and $\mathcal{F}\langle a,a+1 \rangle$. Since $\langle a+1,a^2-a-1 \rangle$ is symmetric, it is sufficient to show that $\mathcal{F}\langle a+1, a^2-a-1 \rangle - (a-1)a \in \langle a+1,a^2-a-1 \rangle$. 
    \begin{align*}
        \mathcal{F}\langle a+1, a^2-a-1 \rangle &= (a+1)(a^2-a-1)-(a^2) \\
        &= a^3-a^2-a+a^2-a-1-a^2 \\
        &= a^3-a^2-2a-1 \\
        \mathcal{F}\langle a+1, a^2-a-1 \rangle - (a-1)a &= a^3-a^2-2a-1-(a-1)a \\
        &=  a^3-a^2-2a-1-a^2+a \\
        &= a^3-2a^2-a-1 \\
    \end{align*}
    We need to show that $a^3-2a^2-a-1 \in \langle a+1,a^2-a-1 \rangle$. Notice that 
    \begin{align*}
        (a-3)(a+1) &= a^2-2a-3 \\
        (a-2)(a^2-a-1) &= a^3-3a^2+a+2 \\
        (a-3)(a+1)+(a-2)(a^2-a-1) &= a^3-2a^2-a-1
    \end{align*}
    Hence,  $\mathcal{F}\langle a+1, a^2-a-1 \rangle - (a-1)a \in \langle a+1,a^2-a-1 \rangle$ so $(a-1)a \notin \langle a+1,a^2-a-1 \rangle$. To see that coefficients less than $a-1$ will not work either, note that $(a-1)(a+1)-(a^2-a-1)=a$. Then 
    \begin{align*}
        \mathcal{F}\langle a+1, a^2-a-1 \rangle - (a-1)a &= (a-3)(a+1)+(a-2)(a^2-a-1) \\
        \mathcal{F}\langle a+1, a^2-a-1 \rangle - (a-2)a &= (2a-4)(a+1)+(a-3)(a^2-a-1)
    \end{align*}
    We can repeat this $a-2$ times until we have 
    $$ \mathcal{F}\langle a+1, a^2-a-1 \rangle -a=(a-3+(a-2)(a-1))(a+1) $$
    We conclude that $a$ is minimal, so $a^2$ is a Betti Element. Similarly, to prove that $a^2-1$ is a Betti Element, consider its factorizations: $(0,a-1,0)$ and $(1,0,1)$. We want to show that $a-1$ is the smallest coefficient $c$ such that $c \cdot (a+1)$ can be supported by $a$ and $a^2-a-1$. Note that $\mathcal{F}\langle a,a^2-a-1 \rangle = a^3-a^2-a-(a^2-1)=a^3-2a^2-a+1$, and \newline  $\mathcal{F}\langle a,a^2-a-1 \rangle - (a-2)(a+1) = (a^3-2a^2-a+1)-(a^2-a-2)=a^3-3a^2+3$. 
    \begin{align*}
        \mathcal{F}\langle a,a^2-a-1 \rangle - (a-2)(a+1) &= a^3-3a^2+3 \\
        &= a^3-4a^2+2a+3+a^2-2a \\
        &= (a-3)(a^2-a-1)+(a-2)a \\
    \end{align*}
    Then $(a-2)(a+1) \notin \langle a,a^2-a-1 \rangle$. Since $a(a)-(a^2-a-1)=a+1$, we have 
    $$\mathcal{F}\langle a,a^2-a-1 \rangle - (a-3)(a+1) = (a-4)(a^2-a-1)+(2a-2)a$$
    We can repeat this process $a-3$ times until we have 
    $$\mathcal{F}\langle a,a^2-a-1 \rangle - (a+1) = ((a-3)a-2)a$$
    So $a-1$ is minimal and we conclude that $a^2-1$ is a Betti Element. Finally, we will demonstrate that $2\mathcal{F}\langle a,a+1 \rangle $ or $2a^2-2a-2$ is a Betti Element. Consider its factorizations: $(0,0,2)$ and $(a-1,a-2,0)$. To see that the latter is a factorization of $2a^2-2a-2$, 
    \begin{align*}
        (a-1)a+(a-2)(a+1) &= a^2-a+a^2-a-2 \\
        &= 2a^2-2a-2
    \end{align*}
    To see that $2$ is the minimal coefficient $c$ such that $c \cdot \mathcal{F}\langle a,a+1 \rangle \in \langle a,a+1 \rangle$, we just need to show that $1 \cdot \mathcal{F}\langle a,a+1 \rangle \notin \langle a,a+1 \rangle$. By definition, this is true since the Frobenius number is never in its monoid. Then $2$ is minimal and $2\mathcal{F}\langle a,a+1 \rangle $ is a Betti Element. 
\end{proof} 

\begin{lemma}
    Let $M=\langle a,a+1,\mathcal{F}\langle a,a+1 \rangle \rangle$. Then $c(M)=2a-3$.
\end{lemma}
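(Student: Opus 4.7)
The plan is to invoke the well-known characterization of the catenary degree of a numerical monoid as the maximum of the catenary degrees at its Betti elements, $c(M) = \max\{c(\beta) : \beta \in \mathrm{Betti}(M)\}$. Combined with the preceding lemma, which identifies $\mathrm{Betti}(M) = \{a^2,\ a^2-1,\ 2\mathcal{F}\langle a,a+1\rangle\}$, this reduces the proof to computing $c(\beta)$ at each of the three Betti elements.

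First I would enumerate $\mathcal{Z}(\beta)$ for each Betti element. A factorization $(z_1,z_2,z_3)$ of an element $m$ satisfies $az_1 + (a+1)z_2 + (a^2-a-1)z_3 = m$, so iterating on $z_3$ and solving the resulting equation in $z_1, z_2$ modulo $a$ gives a short case analysis. This yields
\begin{align*}
\mathcal{Z}(a^2) &= \{(a,0,0),\ (0,1,1)\}, \\
\mathcal{Z}(a^2-1) &= \{(0,a-1,0),\ (1,0,1)\}, \\
\mathcal{Z}(2a^2-2a-2) &= \{(0,0,2),\ (a-1,a-2,0)\}.
\end{align*}
The one subtle point in this enumeration is ruling out a factorization with $z_3=1$ of $2a^2-2a-2$, which would force $az_1 + (a+1)z_2 = a^2-a-1$; this is impossible precisely because $a^2-a-1 = \mathcal{F}\langle a,a+1\rangle$ is not an element of $\langle a,a+1\rangle$ by definition of the Frobenius number.

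Since each Betti element has exactly two factorizations, $c(\beta)$ reduces to the single distance between those two factorizations, computed directly from the definition:
\begin{align*}
c(a^2) &= \max(a,2) - 0 = a, \\
c(a^2-1) &= \max(a-1,2) - 0 = a-1, \\
c(2a^2-2a-2) &= \max(2,2a-3) - 0 = 2a-3.
\end{align*}
Because $2a-3 \geq a$ for all $a \geq 3$, taking the maximum yields $c(M) = 2a-3$. The only real obstacle is making the factorization enumeration airtight; the upper bound $z_3 \leq 2$ for the largest Betti element (and smaller for the others) keeps the case analysis short, and the Frobenius argument is the pivotal observation, since the case $z_3 = 1$ would otherwise shrink $c(2a^2-2a-2)$ and destroy the asymptotic $c(M) \sim 2a$.
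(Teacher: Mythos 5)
Your proposal is correct and follows essentially the same route as the paper: reduce to the Betti elements $a^2$, $a^2-1$, $2\mathcal{F}\langle a,a+1\rangle$, compute the single distance between the two factorizations of each, and take the maximum $2a-3$. Your explicit verification that each Betti element has exactly two factorizations (in particular ruling out $z_3=1$ for $2a^2-2a-2$ via the Frobenius number) is a small but welcome tightening of the paper's argument, which merely asserts the factorization pairs.
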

\begin{proof}
    It is known that the catenary degree of a monoid occurs at a Betti Element. Since we have computed the Betti Elements of $M$, we can determine $c(M)$. Consider the Betti Element $a^2$ and its factorizations: $a^2 = (a,0,0) = (0,1,1)$.
    Note that $gcd = (0,0,0)$ and $d=a$. Then $c(a^2)=a$. Now consider the Betti Element $a^2-1$: $a^2-1 = (0,a-1,0) = (1,0,1)$. 
    Then $gcd=(0,0,0)$ and $d=a-1$. So $c(a^2-1)=a-1$. Finally, consider the Betti Element $2\mathcal{F}\langle a,a+1 \rangle $: $2\mathcal{F}=(0,0,2)=(a-1,a-2,0)$.
    Then $gcd=(0,0,0)$ and $d=2a-3$. So $c(2\mathcal{F}\langle a,a+1 \rangle)=2a-3$. Then $c(M)=\max(a,a-1,2a-3)=2a-3$. 
\end{proof}

\begin{theorem}
    The difference between $c_{mon}(M)$ and $c(M)$ can be arbitrarily large. 
\end{theorem}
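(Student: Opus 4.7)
The plan is entirely straightforward once the two preceding lemmas are in hand: pick the parameterized family $M_a = \langle a, a+1, \mathcal{F}\langle a, a+1\rangle\rangle = \langle a, a+1, a^2 - a - 1\rangle$ introduced just before this theorem, and observe that the gap $c_{mon}(M_a) - c(M_a)$ is a quadratic in $a$ and hence unbounded.

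First I would invoke the lemma giving $c_{mon}(M_a) = a^2 - 2a - 1$, which identifies $M_a$ as a specialization of the family $\langle na, na+n, 2na+nx+1\rangle$ with $n = 1$ and $x$ chosen so that $2a + x + 1 = a^2 - a - 1$, and which thereby reduces to the previously established formula $c_{eq}(M) = na + nx + 1$ together with the bound $c_{adj}(M) < c_{eq}(M)$. Next I would invoke the lemma giving $c(M_a) = 2a - 3$, which came from enumerating the Betti elements $a^2$, $a^2 - 1$, and $2\mathcal{F}\langle a, a+1\rangle$ and verifying that the maximum catenary degree over Betti elements is attained at $2\mathcal{F}\langle a, a+1\rangle$ via its factorizations $(0,0,2)$ and $(a-1, a-2, 0)$.

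With both values in hand, the difference is
$$c_{mon}(M_a) - c(M_a) = (a^2 - 2a - 1) - (2a - 3) = a^2 - 4a + 2 = (a-2)^2 - 2.$$
Given any target bound $B \in \mathbb{N}$, choosing $a$ sufficiently large (and at least large enough that the hypothesis $x \geq 2$ of the earlier framework holds, which amounts to $a \geq 4$) forces $c_{mon}(M_a) - c(M_a) > B$, so the difference is unbounded as $a$ ranges over $\mathbb{N}$. This is precisely the claim of the theorem.

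The ``main obstacle'' is not in this proof itself, which is a one-line estimate, but rather in the lemmas it relies on: the Betti-element enumeration that pins down $c(M_a) = 2a - 3$ and the matching $c_{eq}$/$c_{adj}$ analysis that pins down $c_{mon}(M_a) = a^2 - 2a - 1$. Once those are in place, the theorem is simply the observation that a specific quadratic in $a$ tends to infinity, and the only care required is to record a lower bound on $a$ ensuring the family of monoids lies within the scope of the preceding lemmas.
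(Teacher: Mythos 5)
Your proposal is correct and follows exactly the paper's argument: specialize to $M=\langle a,a+1,\mathcal{F}\langle a,a+1\rangle\rangle$, cite the lemmas giving $c_{mon}(M)=a^2-2a-1$ and $c(M)=2a-3$, and observe the difference is an unbounded quadratic in $a$. In fact your arithmetic $(a^2-2a-1)-(2a-3)=a^2-4a+2$ is the correct value, whereas the paper records $a^2-4a-4$; this slip does not affect the conclusion in either case.
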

\begin{proof}
    Let $M=\langle a,a+1,\mathcal{F}\langle a,a+1 \rangle \rangle$. We know that $c_{mon}(M)=a^2-2a-1$ and $c(M)=2a-3$. Then \newline $c_{mon}(M)-c(M)=a^2-4a-4$, which can grow arbitrarily large for large $a$. 
\end{proof}

The following graph illustrates this result. As we increase $a$ in the class of monoids $M=\langle a,a+1,\mathcal{F}\langle a,a+1 \rangle \rangle$, the difference between monotone and regular catenary degrees grows arbitrarily large. 

\begin{figure}
    \centering
    \includegraphics[width=10cm]{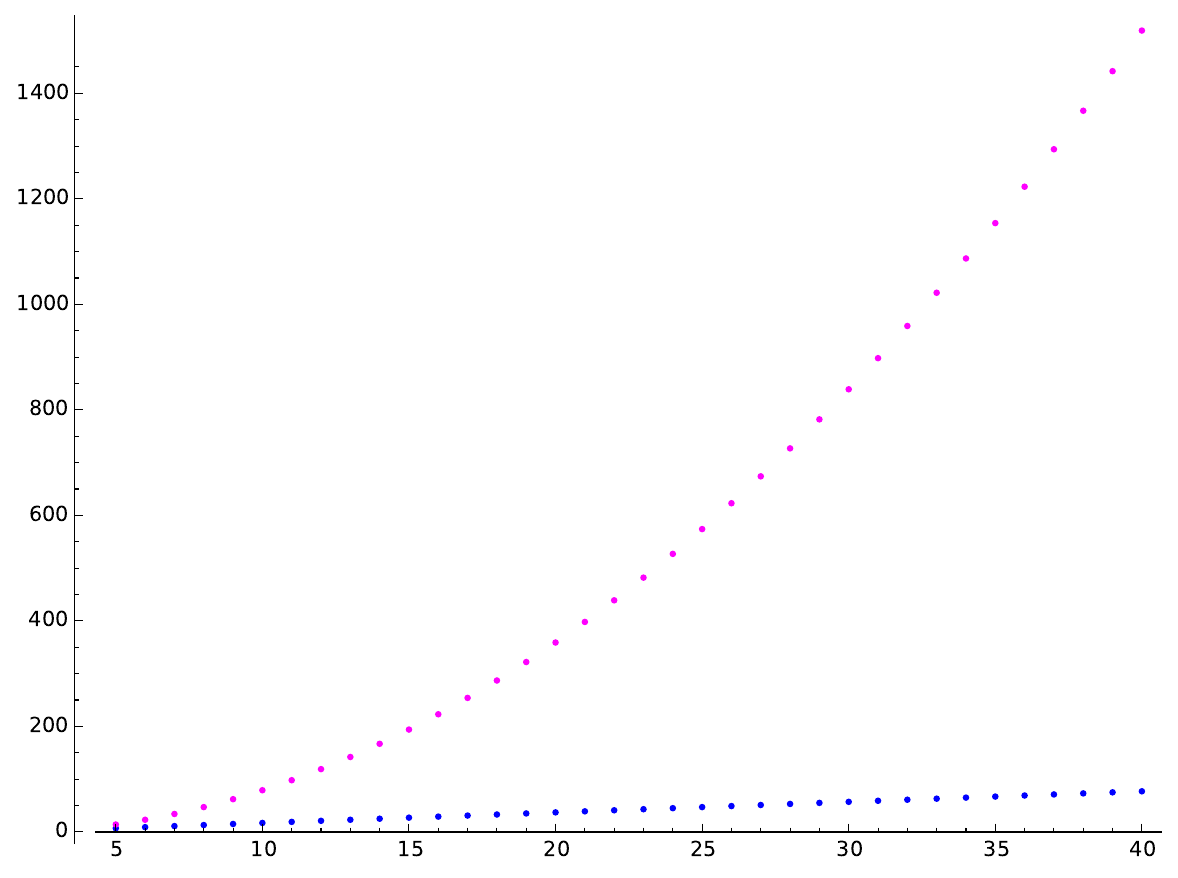}
    \caption{Monotone and Regular Catenary Degrees of $\langle a,a+1,\mathcal{F}\langle a,a+1\rangle \rangle$}
    \label{fig:Arbitrarily Large}
\end{figure}

\newpage

\end{document}